\newcommand{\C}{\mathbb{C}}
\newcommand{\Hc}{\mathcal{H}}
\newcommand{\K}{\mathcal{K}}
\newcommand{\N}{\mathcal{N}}
\newcommand{\R}{\mathcal{R}}
\newcommand{\B}{\mathcal{B}}
\begin{document}

\title{GD1 inverse and 1GD inverse for Hilbert space operators
}


\author{Jajati Keshari Sahoo   \and Prdeep Boggarapu \and Ratikanta Behera \and M. Zuhair Nashed
}

\authorrunning{J.K. Sahoo, P. Boggarapu and R. Behera and M. Z. Nashed} 

\institute{ Jajati Keshari Sahoo \at
              Department of Matheamtic, BITS Pilani K.K. Birla Goa Campus, Goa, India.
              \email{jksahoo@goa.bits-pilani.ac.in}   
           \and Pradeep Boggarapu \at
              Department of Matheamtic, BITS Pilani K.K. Birla Goa Campus, Goa, India.              \email{pradeepb@goa.bits-pilani.ac.in} 
                    \and Ratikanta Behera, \at
                    Department of Computational and Data Sciences, Indian Institute of Science, Bangalore, 560012, India, \email{ratikanta@iisc.ac.in}
                    \and M. Zuhair Nashed, \at 
                    Department of Mathematics, University of Central Florida, Orlando, Florida, USA,  \email{M.Nashed@ucf.edu } 
                    }

\date{Received: date / Accepted: date}
 
\maketitle

\begin{abstract}

Mosi\'c and Djordjevi\'c introduced the notation of the gDMP inverse for Hilbert space operators in [J. Spectr. Theory, 8(2):555–573, 2018] by considering generalized Drazin inverse with the Moore-Penrose inverse. This paper introduces two new classes of inverses: GD1 (generalized Drazin and inner) inverse and 1GD (inner and generalized Drazin) inverse for Hilbert space operators. The existence and uniqueness of the GD1 (also 1GD) inverse are discussed along with some properties through core-quasinilpotent decomposition and closed range decomposition operator. We further establish a few explicit representations of the GD1 inverse and their interconnections with generalized Drazin inverse. In addition, we discuss a few properties of GD1 (also 1GD) inverse through binary relation.
\end{abstract}

\keywords{Drazin inverse\and inner inverse \and Generalized Drazin inverse \and gDMP inverse \and Hilbert space operators  }

\subclass{47A05 \and 47A62 \and 15A09}
\section{Introduction}\label{sec1}
\subsection{Background and motivation}
For any complex Hilbert spaces $\mathcal{H}$ and $\K$, we denote $\mathcal{B}(\Hc , \K)$ for the space of all bounded operators from $\Hc$ into $\K$ and $\B(\Hc):=\B(\Hc,\Hc)$. For any operator $T\in \B(\Hc , \K)$, we denote the null space, range space, spectrum  and adjoint of $T$ by $\N(T), \R(T)$, $\sigma(T)$  and  $T^*$ respectively. It is easy to see that for an operator $T\in B(\Hc, \K)$, $\R(T)$ is closed if and only if there exists $X\in \B(\K, \Hc)$ such that $TXT=T$, in this case, we call $X$ is an inner generalized inverse of $T$ and the operator $T$ is relatively regular. We denote the set of inner inverses of $T$ by $T\{1\}$ and element of $T\{1\}$, by $T^{-}$. Further, an element $A$ of a complex Hilbert space $\mathcal{H}$ is called regular (or relatively regular) if there is $X \in \mathcal{H}$. We now recall the Drazin inverse for Hilbert space operators. For an operator $T\in \B(\Hc)$, if there exists an operator $X\in \B(\Hc)$ satisfying the following:
\begin{equation}\label{Di}
XTX=X, \quad TX=XT, \quad   T-T^2X~\text{is nilpotent}  
\end{equation}
then $T$ is said to be Drazin invertible and such an $X$ is called Drazin inverse of $T$ and denoted by $T^D$. The Drazin inverse has various applications such as differential and singular difference equations and Markov chain \cite{Camp79,Chate83,Rako01}.  Because of the non-reflexive condition, the Drazin inverse is very useful in ring theory,  matrix theory (specifically in spectral theory), and various applications of matrix computation. Further, Drazin has discussed the definitions of generalized inverses that give a generalization of the original Drazin inverse in \cite{Drazin92}.  It is known that the Drazin inverse of $T$ exists if and only if $0$ is a pole of the resolvent operator $R_\lambda(T)=(\lambda I-T)^{-1}$ of finite order, say $k$. In this case, $k$ is called the Drazin index of $T$ denoted by ind($T$) and the Drazin inverse of $T$ is unique. If an operator $T\in \B(\Hc)$  has a Drazin index of at most 1, it is called group invertible and $T^D$ is called group inverse of $T$ denoted by $T^\#$. The representations and properties of the Drazin inverses on Hilbert space operators were derived in  \cite{koli,stan1,du,Kol,nas87,wei3}.

Indeed, Moore \cite{BenBook} proposed the concept of generalized inverses of matrices in the 1920s, and Groetsch  in \cite{Groetsch77} generalized the original idea to the bounded linear operators between Hilbert spaces with closed range. 
Further, Nashed \cite{Nashed} discussed the perturbation and approximations of generalized inverses of linear operators  between more general Banach spaces.  It is well known that the perturbation analysis of generalized inverses
in Hilbert and Banach spaces has significantly impacted in practical applications of operator theory (see \cite{Jipu8,Nashed71,Wei01}. Motivated by the work of Mosi\'c and Djordjevi\'c \cite{mosic18}, and the idea of recent works on matrices \cite{GDMP22,mp1,sahoo22}, in this paper, we introduce and study the properties of GD1inverses and
1GD inverses for Hilbert space operators. A brief summarization of the main points of the contribution in this article is listed below.
\begin{itemize}
\item[$\bullet$]   We have introduced the GD1 inverse and its dual (1GD inverse) for Hilbert space operators by extending these inverses as more comprehensive classes of generalized Drazin inverse and an inner inverse. 

\item[$\bullet$]  We have discussed several characterizations of GD1 inverse and 1GD inverse through core-quasinilpotent and closed range decomposition. 

\item[$\bullet$] A few explicit representations of the GD1 inverse and their interconnections with generalized Drazin inverse have been presented explicitly. 

\item[$\bullet$]  A binary relation for both GD1 and 1GD inverse is introduced. Further, we have shown that the relation is a pre-order under some suitable conditions.  
\end{itemize}

\subsection{Outline}
The outline of the paper is as follows. We present some necessary definitions and notation in Sect. 2. Definition, existence, and several explicit representations of the GD1 inverse and their interconnections with  generalized Drazin inverses for Hilbert space operators are considered in Sect. 3. In Sect. 4, we discuss a few properties of binary relations for the GD1 inverse through gDMP partial order. Given the GD1 inverse, we discuss several representations and characterizations of the 1GD inverse in Sect. 5. In addition, we discuss the properties of binary relations for the GD1 through gDMP partial order. The work is concluded along with a few future perspective problems in Sect. 5.
 \section{Preliminaries}
In this {section}, we present a few notations, and  definitions, which will be used in the subsequent sections. 

\subsection{Generalized Drazin Inverse}

An operator $T\in \B(\Hc)$ is said to be quasinilpotent if $I-XT$ is invertible in $\B(\Hc)$ for every $X\in \B(\Hc)$ with $XT=TX$. $T$ is quasinilpotent if and only if $\|T^n\|^{1/n} \to 0$ which is equivalent to $\lambda I -T $ is invertible for all $\lambda \in \C-\{0\}.$ If we replace nilpotent in \eqref{Di} by quasinilpotent, we get the definition of generalized Drazin inverse.

For $T\in \B(\Hc)$, if there exists $X\in \B(\Hc)$ satisfying the following:
\begin{equation}\label{di}
XTX=X, \quad TX=XT, \quad   T-T^2X~\text{is quasinilpotent}  
\end{equation}
then $T$ is said to be Drazin invertible and such an $X$ is called generalized Drazin inverse of $T$ and denoted by $T^d$. Since every nilpotent operator is quasinilpotent, the Drazin inverse is the special case of generalized Drazin inverse.

It is proved that (see \cite{Kol} {Lemma 2.4}), $T\in \B(\Hc)$ is generalized Drazin invertible i.e., $T^d$ exists in $\B(\Hc)$ if and only if there is an idempotent $P \in \B(\Hc)$ commuting with $T$ such that
$$TP ~\text{is quasinilipotent},\quad T+P ~ \text{is invertible}.$$ Here in this case, the generalized Drazin inverse $T^d$ is unique and is given by 
$$ T^d= (T+P)^{-1}(I-P).$$ And then it is proved that the preceding statement is true if and only if $0 \notin acc\;\sigma(T)$.

\subsection{Core-Quasinilpotent Decomposition}
If $T\in \B(\Hc)$ and $0 \notin acc\, \sigma(T)$ then the spectral projector (idempotent) $P$ of $T$ corresponding to $\{0\}$ is given by $P=I-TT^d$. In \cite[Lemma 1.1]{drag}, it is proved that $T$ has the following operator matrix form
\begin{equation}\label{cqnd}
    T=\begin{bmatrix} T_1 ~&~ 0 \\ 0 ~&~ T_2 \end{bmatrix}~:~\begin{bmatrix} \N(P) \\ \R(P) \end{bmatrix} \to \begin{bmatrix} \N(P) \\ \R(P) \end{bmatrix}
\end{equation}
with respect to the decomposition $\Hc=\N(P) \oplus \R(P)$, where $T_1:\N(P)\to \N(P) $ is invertible and $T_2:\R(P) \to \R(P)$ is quasinilpotent. And the generalized Drazin inverse of $T$ is given by 
$$T^d=\begin{bmatrix} T_1^{-1} ~&~ 0 \\ 0 ~&~ 0 \end{bmatrix}~:~\begin{bmatrix} \N(P) \\ \R(P) \end{bmatrix} \to \begin{bmatrix} \N(P) \\ \R(P) \end{bmatrix}.$$

In view of \eqref{cqnd}, if we denote $C_T=\begin{bmatrix} T_1 ~&~ 0 \\ 0 ~&~ 0 \end{bmatrix}$ and $Q_T=\begin{bmatrix} 0 ~&~ 0 \\ 0 ~&~ T_2 \end{bmatrix}$, then we have $T=C_T+Q_T$ and it is known as core-quasinilpotent decomposition of $T$. It can be proved that $C_T=T^2 T^d$, known as the core part of $T$ and $Q_T=TP$, known as the quasinilpotent part of $T$.

\subsection{Closed Range Decomposition}
Let $T\in \B(\Hc)$ be generalized Drazin invertible which has closed range. It is known that (see \cite[Lemma 1.2]{drag} we have the decomposition  $\Hc=\R(T)\oplus^{\perp}\N(T^*)$ with respect to which the operator $T$ has the following matrix representation:
\begin{equation}\label{crd}
    T=\begin{bmatrix} A_1 ~&~ A_2 \\ 0 ~&~ 0 \end{bmatrix}~:~\begin{bmatrix} \R(T) \\ \N(T^*) \end{bmatrix} \to \begin{bmatrix} \R(T) \\ \N(T^*) \end{bmatrix}
\end{equation}
where $D=A_1A_{1}^*+{A}_2A_{2}^*:R(T) \to R(T)$ is positive invertible operator. The generalized Drazin inverse of $T$ with respect to this decomposition is given by (see \cite{mosic18})
$$T^d=\begin{bmatrix} A_1^d ~&~ (A_1^d)^2 A_2 \\ 0 ~&~ 0 \end{bmatrix}~:~\begin{bmatrix} \R(T) \\ \N(T^*) \end{bmatrix} \to \begin{bmatrix} \R(T) \\ \N(T^*) \end{bmatrix}.$$

\section{GD1 inverses}
In this section, we introduce GD1 inverse for Hilbert space operators by combining generalized Drazin inverse and an inner inverse. In addition, we discuss a few characterizations of these inverses along with its interconnection with other generalized inverses.
\subsection{Existence of GD1 inverses}
\begin{proposition}\label{prop-1}
Let $T\in \B(\Hc)$ be generalized Drazin invertible which has closed range.  Then $X=T^dTT^{-}$ is the unique solution of the following conditions:
\begin{equation}\label{eqn-5}
    TX=P_{\R(TT^dT),\N(T^dT^{-})} \mbox{ and } \R(X)\subset \R(TT^d).
\end{equation}
\end{proposition}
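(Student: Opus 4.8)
The plan is to split the claim into existence and uniqueness, relying throughout on the three defining identities $TT^-T=T$ (inner inverse), together with $T^dTT^d=T^d$ and $TT^d=T^dT$ (generalized Drazin inverse). From these I would first record the auxiliary facts that will do all the work: the operator $E:=T^dT=TT^d$ is idempotent (since $ET^d=T^dTT^d=T^d$ gives $E^2=E\cdot T^dT=(ET^d)T=T^dT=E$), that $\R(E)=\R(TT^d)=\R(T^d)$, that $TT^dT=T^2T^d$, and the key algebraic relation $(T^d)^2T^2=T^d(T^dT)T=(T^dT)(T^dT)=E^2=E=T^dT$. With these in hand the verification becomes formal.

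For existence I would take $X=T^dTT^-$ and compute $TX=TT^dTT^-$. The first task is to show $TX$ is idempotent, which I expect to follow from a single substitution of $TT^-T=T$:
\begin{equation*}
(TX)^2=TT^d(TT^-T)T^dTT^-=TT^d\,T\,T^dTT^-=T(T^dTT^d)TT^-=TT^dTT^-=TX.
\end{equation*}
Next I would identify its range and kernel. For the range, $TX=(TT^dT)T^-$ gives $\R(TX)\subseteq\R(TT^dT)$, while $TXT=TT^d(TT^-T)=TT^dT$ gives the reverse inclusion, so $\R(TX)=\R(TT^dT)$. For the kernel I would rewrite $TX=T^2T^dT^-$; the inclusion $\N(T^dT^-)\subseteq\N(TX)$ is immediate, and the reverse is the delicate point: if $T^2T^dT^-x=0$, applying $(T^d)^2$ and using $(T^d)^2T^2=T^dT$ yields $T^dT(T^dT^-x)=(T^dTT^d)T^-x=T^dT^-x=0$. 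Hence $\N(TX)=\N(T^dT^-)$, and since a bounded idempotent splits $\Hc$ as range $\oplus$ kernel, $TX=P_{\R(TT^dT),\N(T^dT^-)}$. The second condition $\R(X)\subseteq\R(TT^d)$ then drops out, since $X=T^d(TT^-)$ forces $\R(X)\subseteq\R(T^d)=\R(TT^d)$.

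For uniqueness I would suppose $X_1,X_2$ both satisfy \eqref{eqn-5}. Then $T(X_1-X_2)=0$ and $\R(X_i)\subseteq\R(TT^d)$ give $\R(X_1-X_2)\subseteq\N(T)\cap\R(TT^d)$, so it suffices to show this intersection is trivial. Here I would use that $E=TT^d$ is idempotent with $\R(E)=\R(TT^d)$: any $z\in\R(TT^d)$ satisfies $Ez=z$, whereas $Tz=0$ forces $Ez=T^d(Tz)=0$, hence $z=0$. Thus $X_1=X_2$. I expect the main obstacle to be the reverse kernel inclusion $\N(TX)\subseteq\N(T^dT^-)$, since it is the only step that cannot be read off from the inner-inverse identity alone and genuinely requires the spectral/Drazin relation $(T^d)^2T^2=T^dT$; the remaining steps are routine manipulations with the above identities.
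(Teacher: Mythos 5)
Your proposal is correct and follows essentially the same route as the paper: verify that $TX=TT^dTT^-$ is idempotent via $TT^-T=T$ and $T^dTT^d=T^d$, identify $\R(TX)=\R(TT^dT)$ from $TXT=TT^dT$, identify $\N(TX)=\N(T^dT^-)$ by recovering $T^dT^-$ from $TX$ through multiplication by $(T^d)^2$, and prove uniqueness by showing $\R(X_1-X_2)\subseteq\N(T)\cap\R(TT^d)=\{0\}$. The only difference is that you spell out the auxiliary identities (such as $(T^d)^2T^2=T^dT$) more explicitly than the paper does, which is harmless.
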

\begin{proof}
Let $X=T^dTT^{-}$. Then we verify that  $(TX)^2=TT^dTT^{-}TT^dTT^{-}=TT^dTT^{-}=TX$ and  $\R(X)=\R(T^dTT^{-})\subset \R(TT^d)$. The range condition $\R(TX)=\R(TT^dT)$ is follows from $TT^dT=TT^dTT^{-}T=TXT$. From $T^dT^{-}=(T^d)^2TT^d T T^{-}=(T^d)^2TX$ and $TX=T^2T^dT^{-}$, we obtain $\N(TX)=\N(T^dT^{-})$. Next, we will show the uniqueness of $X$. Suppose there are two operators, $X_1$ and $X_2$, which satisfy \eqref{eqn-5}. Then $T(X_1-X_2)=\bf 0$ and subsequently, $\R(X_1-X_2)\subset \N(T)\subset \N(T^dT)$. Using $\R(X_1)\subset \R(TT^d)$ and $\R(X_2)\subset \R(TT^d)$, we can conclude that $\R(X_1-X_2)\subset \R(TT^d)\cap \N(TT^d)=\{ 0\}$. Hence $X_1=X_2$.
\end{proof}
In view of the Proposition \ref{prop-1}, we define the following representation of GD1 inverse for Hilbert space operators.
\begin{definition}\label{defn-1}
Let $T\in \B(\Hc)$ be generalized Drazin invertible which has closed range and $T^{-}$ be a fixed inner inverse of $T$. An operator $X$ is called the GD1 inverse of $T$ if it satisfies
\begin{center}
   $TX=P_{\R(TT^dT),\N(T^dT^{-})} \mbox{ and } \R(X)\subset \R(TT^d)$. 
\end{center}
\end{definition}
The GD1 inverse of $T$ is denoted by $T^{GD-}$ and it is explicitly represented by $T^{GD-}=T^dTT^{-}$.

One may observe that for every fixed inner $T^{-}$ of $T$, we may get different GD1 inverse of $T$. So, it is important to study the equivalent class of inner inverses, for which we get the same GD1 inverse. For any operator $T\in\B(\Hc)$ with closed range, consider the core-quasinilpotent decomposition of $T$, as given \ref{cqnd}. That is, 
\begin{equation*}
    T=\begin{bmatrix} T_1 ~&~ 0 \\ 0 ~&~ T_2 \end{bmatrix}~:~\begin{bmatrix} \N(P) \\ \R(P) \end{bmatrix} \to \begin{bmatrix} \N(P) \\ \R(P) \end{bmatrix}.
\end{equation*}
Any inner inverse of $T$ will be of the form:
\begin{equation}\label{1i}
    T^{-}=\begin{bmatrix} {T_1}^{-1} ~&~ Y \\ Z ~&~ T_2^- \end{bmatrix}~:~\begin{bmatrix} \N(P) \\ \R(P) \end{bmatrix} \to \begin{bmatrix} \N(P) \\ \R(P) \end{bmatrix}, \mbox{ where $\R(T_2)\subset \N(Y)$, $\R(Z)\subset \N(T_2)$ and $T_2^- \in T_2\{1\}$}.
\end{equation} 
Thus the GD1 inverse of $T$ is given by
\begin{equation}\label{eqqq-7}
    T^{GD-}=\begin{bmatrix} {T_1}^{-1} ~&~ Y \\ 0 ~&~ 0 \end{bmatrix}~:~\begin{bmatrix} \N(P) \\ \R(P) \end{bmatrix} \to \begin{bmatrix} \N(P) \\ \R(P) \end{bmatrix}.
    \end{equation} 
Next, we define a binary relation $\sim_{1}$ on the set $T\{1\}$. For $T^{-},~T^{=}\in T\{1\}$,
\begin{equation*}
    T^{-}\sim_{1}T^{=}\mbox{ if and only if }T^dTT^{-}=T^dTT^{=}.
\end{equation*}
It is clear that $\sim_1$ is an equivalent relation on $T\{1\}$ and the equivalence class of $T^{-}\in T\{1\}$ is given by $[T^{-}]_{\sim_1}=\{T^{=}\in T\{1\}~:~T^dTT^{-}=T^dTT^{=}\}$. We can represent the inner inverse $T^{=}$  of $T$, as
\begin{equation*}
    T^{=}=\begin{bmatrix} {T_1}^{-1} ~&~ Y_1 \\ Z_1 ~&~ T_2^= \end{bmatrix}~:~\begin{bmatrix} \N(P) \\ \R(P) \end{bmatrix} \to \begin{bmatrix} \N(P) \\ \R(P) \end{bmatrix}, \mbox{ where $\R(T_2)\subset \N(Y_1)$, $\R(Z_1)\subset \N(T_2)$ and $T_2^=\in T_2\{1\}$)}. 
\end{equation*}
Hence $T^{=}\in [T^{-}]_{\sim_1}$ if and only if $Y=Y_1$. Further,
\begin{equation*}
    [T^{-}]_{\sim_1}=\left\{\begin{bmatrix} {T_1}^{-1} ~&~ Y \\ Z_1 ~&~ X_1 \end{bmatrix}~:~\begin{bmatrix} \N(P) \\ \R(P) \end{bmatrix} \to \begin{bmatrix} \N(P) \\ \R(P) \end{bmatrix}\in T\{1\}~:~ \R(Z_1)\subset \N(T_2) \mbox{ and } X_1\in T_2\{1\}\right\}.
\end{equation*}
Therefore, $T^{GD-}$ is invariant to the operators of $ [T^{-}]_{\sim_1}$.

Using the the above computations, we state the following result.
\begin{theorem}\label{thm1}
Let $T\in \B(\Hc)$ be generalized Drazin invertible which has closed range and let  \begin{equation*}
    T=\begin{bmatrix} T_1 ~&~ 0 \\ 0 ~&~ T_2 \end{bmatrix}~:~\begin{bmatrix} \N(P) \\ \R(P) \end{bmatrix} \to \begin{bmatrix} \N(P) \\ \R(P) \end{bmatrix}.
\end{equation*} be a core quasinilpotent decomposition of $T$, as defined in \eqref{cqnd}. For an inner inverse of $T$ of the form
\begin{equation*}
    T^{-}=\begin{bmatrix} {T_1}^{-1} ~&~ Y \\ Z ~&~ T_2^- \end{bmatrix}~:~\begin{bmatrix} \N(P) \\ \R(P) \end{bmatrix} \to \begin{bmatrix} \N(P) \\ \R(P) \end{bmatrix}, \mbox{ where $\R(T_2)\subset \N(Y)$ and $\R(Z)\subset \N(T_2)$}, 
    \end{equation*}
the GD1 inverse of $T$ is given by 
\begin{equation}
    T^{GD-}=\begin{bmatrix} {T_1}^{-1} ~&~ Y \\ 0 ~&~ 0 \end{bmatrix}=T^d+\begin{bmatrix} 0 ~&~ Y \\ 0 ~&~ 0 \end{bmatrix}~:~\begin{bmatrix} \N(P) \\ \R(P) \end{bmatrix} \to \begin{bmatrix} \N(P) \\ \R(P) \end{bmatrix}.
    \end{equation} 
Consequently, $T^{GD-}=T^d$ if and only if $Y=0$.
\end{theorem}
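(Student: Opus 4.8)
The plan is to obtain the GD1 inverse directly from its explicit representation $T^{GD-}=T^dTT^{-}$, which was established in Proposition~\ref{prop-1} and recorded in Definition~\ref{defn-1}, by substituting the block forms of all three factors with respect to the decomposition $\Hc=\N(P)\oplus\R(P)$. From the core-quasinilpotent decomposition \eqref{cqnd} and the formula for $T^d$ displayed just after it, we have
\begin{equation*}
T=\begin{bmatrix} T_1 & 0 \\ 0 & T_2 \end{bmatrix}, \qquad T^d=\begin{bmatrix} T_1^{-1} & 0 \\ 0 & 0 \end{bmatrix},
\end{equation*}
while the inner inverse $T^{-}$ carries the hypothesised block entries $T_1^{-1},Y,Z,T_2^-$. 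The whole argument then reduces to a single block multiplication.

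First I would form $T^dT$. Since $T^dT=\begin{bmatrix} T_1^{-1}T_1 & 0 \\ 0 & 0 \end{bmatrix}=\begin{bmatrix} I & 0 \\ 0 & 0 \end{bmatrix}$, this coincides with the idempotent $TT^d=I-P$, the projector onto $\N(P)$ along $\R(P)$; in particular it annihilates the $\R(P)$-component. Right-multiplying by $T^{-}$ then gives
\begin{equation*}
T^{GD-}=T^dTT^{-}=\begin{bmatrix} I & 0 \\ 0 & 0 \end{bmatrix}\begin{bmatrix} T_1^{-1} & Y \\ Z & T_2^- \end{bmatrix}=\begin{bmatrix} T_1^{-1} & Y \\ 0 & 0 \end{bmatrix}.
\end{equation*}
Because the left factor kills the second block row, the entries $Z$ and $T_2^-$ drop out entirely and only the top row $(T_1^{-1},Y)$ survives, which is the first claimed equality. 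The additive splitting follows at once by subtracting the block form of $T^d$: since $T^{GD-}-T^d=\begin{bmatrix} 0 & Y \\ 0 & 0 \end{bmatrix}$, we recover $T^{GD-}=T^d+\begin{bmatrix} 0 & Y \\ 0 & 0 \end{bmatrix}$.

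For the concluding biconditional, I would invoke that two operators written in the same block decomposition are equal precisely when their corresponding blocks agree; hence $T^{GD-}=T^d$ holds exactly when the off-diagonal block $Y$ vanishes, giving $T^{GD-}=T^d \iff Y=0$.

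I do not anticipate a genuine obstacle here: once the block representations are assembled, the computation is a one-line product. The only point deserving care is a consistency check, namely that the hypothesised form of $T^{-}$ really is an inner inverse --- expanding $TT^{-}T=T$ shows that the side conditions $\R(T_2)\subset\N(Y)$ (equivalently $YT_2=0$), $\R(Z)\subset\N(T_2)$ (equivalently $T_2Z=0$), and $T_2^-\in T_2\{1\}$ are exactly what is forced. These constraints, however, never enter the value of $T^dTT^{-}$, which is precisely why every inner inverse sharing the same block $Y$ produces the same GD1 inverse, consistent with the invariance under $[T^{-}]_{\sim_1}$ noted before the theorem.
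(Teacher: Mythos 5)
Your proposal is correct and follows essentially the same route as the paper: the authors also obtain the result by the single block multiplication $T^{GD-}=T^dTT^{-}$ in the core-quasinilpotent decomposition (the computation displayed just before the theorem, culminating in \eqref{eqqq-7}), with the additive splitting and the biconditional $T^{GD-}=T^d\iff Y=0$ read off blockwise. Your added consistency check that the hypothesised form of $T^{-}$ is genuinely an inner inverse matches the paper's remark accompanying \eqref{1i}.
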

In view of the decomposition \eqref{crd}, we can verify the following theorem. 
\begin{theorem}\label{thm-2}
Let $T\in \B(\Hc)$ be generalized Drazin invertible which has closed range and let  \begin{equation*}
     T=\begin{bmatrix} A_1 ~&~ A_2 \\ 0 ~&~ 0 \end{bmatrix}~:~\begin{bmatrix} \R(T) \\ \N(T^*) \end{bmatrix} \to \begin{bmatrix} \R(T) \\ \N(T^*) \end{bmatrix}
\end{equation*} be the decomposition of $T$, as defined in \eqref{crd}.
For an inner inverse 
\begin{center}
    $T^-=\begin{bmatrix} Z_1 ~&~ Z_2 \\ Z_3 ~&~ Z_4 \end{bmatrix}~:~\begin{bmatrix} \R(T) \\ \N(T^*) \end{bmatrix} \to \begin{bmatrix} \R(T) \\ \N(T^*) \end{bmatrix}$ (with $A_1Z_1+A_2Z_3=I$),
\end{center}
the GD1 inverse of $T$ is given by 
$$T^{GD-}=\begin{bmatrix} A_1^d ~&~ A_1^d(A_1Z_2+A_2Z_4) \\ 0 ~&~ 0 \end{bmatrix}~:~\begin{bmatrix} \R(T) \\ \N(T^*) \end{bmatrix} \to \begin{bmatrix} \R(T) \\ \N(T^*) \end{bmatrix},~Z_2\in\B(\N(T^*),\R(T)),~Z_4\in\B(\N(T^*)).$$
Consequently, $T^{GD-}=T^d$ if and only if $A_1^d(A_1Z_2+A_2Z_4)=(A_1^d)^2A_2$.
\end{theorem}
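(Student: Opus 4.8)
The plan is to exploit the explicit representation $T^{GD-}=T^dTT^{-}$ furnished by Proposition \ref{prop-1} and Definition \ref{defn-1}, and then simply carry out the block multiplication in the closed range decomposition \eqref{crd}. The only ingredients needed are the matrix forms of $T$ and $T^-$ given in the statement, the matrix form of $T^d$ recorded just below \eqref{crd}, and the inner-inverse hypothesis $A_1Z_1+A_2Z_3=I$. No new idea is required beyond careful bookkeeping, so the proof should be short.

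First I would compute the product $TT^-$. Multiplying the two block operators gives
$$TT^-=\begin{bmatrix} A_1 & A_2 \\ 0 & 0 \end{bmatrix}\begin{bmatrix} Z_1 & Z_2 \\ Z_3 & Z_4 \end{bmatrix}=\begin{bmatrix} A_1Z_1+A_2Z_3 & A_1Z_2+A_2Z_4 \\ 0 & 0 \end{bmatrix},$$
and the hypothesis $A_1Z_1+A_2Z_3=I$ collapses the $(1,1)$ entry to the identity on $\R(T)$, so that
$$TT^-=\begin{bmatrix} I & A_1Z_2+A_2Z_4 \\ 0 & 0 \end{bmatrix}.$$
Next I would left-multiply by the block form of the generalized Drazin inverse and read off
$$T^dTT^- = \begin{bmatrix} A_1^d & (A_1^d)^2A_2 \\ 0 & 0 \end{bmatrix}\begin{bmatrix} I & A_1Z_2+A_2Z_4 \\ 0 & 0 \end{bmatrix}=\begin{bmatrix} A_1^d & A_1^d(A_1Z_2+A_2Z_4) \\ 0 & 0 \end{bmatrix},$$
which is exactly the asserted formula for $T^{GD-}$. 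The \emph{consequently} clause then drops out by comparison with $T^d$: both matrices carry the same $(1,1)$ block $A_1^d$ and a vanishing bottom row, so $T^{GD-}=T^d$ holds if and only if their $(1,2)$ blocks coincide, i.e. $A_1^d(A_1Z_2+A_2Z_4)=(A_1^d)^2A_2$.

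The calculation itself is routine block algebra; the main point to watch is the bookkeeping of domains and codomains of the off-diagonal operators $Z_2\in\B(\N(T^*),\R(T))$ and $Z_4\in\B(\N(T^*))$, so that each block product is well defined and lands in the correct summand of $\R(T)\oplus\N(T^*)$. I would also remark that the side condition $A_1Z_1+A_2Z_3=I$ is not an extra assumption but is forced by the relation $TT^-T=T$: writing out that identity in blocks yields $(A_1Z_1+A_2Z_3)\begin{bmatrix} A_1 & A_2 \end{bmatrix}=\begin{bmatrix} A_1 & A_2 \end{bmatrix}$, and since $D=A_1A_1^*+A_2A_2^*$ is positive invertible the row operator $\begin{bmatrix} A_1 & A_2 \end{bmatrix}\colon\R(T)\oplus\N(T^*)\to\R(T)$ is surjective, whence $A_1Z_1+A_2Z_3=I$ on $\R(T)$. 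This is the only place where the positivity of $D$ from \eqref{crd} is used.
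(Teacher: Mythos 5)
Your computation is correct and is exactly the verification the paper leaves to the reader (the paper states only ``In view of the decomposition \eqref{crd}, we can verify the following theorem''): one multiplies out $T^{GD-}=T^dTT^-$ in the closed range decomposition using the displayed block form of $T^d$, and the ``consequently'' clause follows by comparing $(1,2)$ blocks. Your added observation that $A_1Z_1+A_2Z_3=I$ is not a hypothesis but is forced by $TT^-T=T$ together with the surjectivity of $\begin{bmatrix} A_1 & A_2 \end{bmatrix}$ (coming from the invertibility of $D=A_1A_1^*+A_2A_2^*$) is a correct and worthwhile refinement that the paper does not spell out.
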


\subsection{Example}

Here we will give an example of operator with closed range on $l^2$, we will find its generalized Drazin inverse and the class of $GD1$ inverses. Consider the standard Schauder basis $\{e_1, e_2, \ldots \}$ for the Hilbert space $l^2$and let $B_1=\{e_1, e_2, e_3\}$ and $B_2= \{e_4, e_5, \ldots \}$, $\Hc_1=span(B_1)$,  $\Hc_2=\overline{span(B_2)}$, and $l^2=\Hc_1\oplus \Hc_2$. Define $T:l^2 \to l^2$ using the block matrix by
\begin{equation}\label{ex1}
    T=\begin{bmatrix} A ~&~ 0 \\ 0 ~&~ D \end{bmatrix}
\end{equation}
with respect the decomposition $l^2=\Hc_1\oplus \Hc_2$, where $A:\Hc_1 \to \Hc_1$ whose matrix representation with respect to the basis $B_1$ is given by $$A=\begin{bmatrix} 1~&~2~&~3\\
0~&~2~&~0\\
0~&~1~&~0\end{bmatrix}$$ and $D:\Hc_2\to \Hc_2$ is the diagonal operator defined by $\displaystyle D e_n=\Big( \frac{2n-1}{n}\Big)e_n$ for $n\geq 4$. Since $D$ is invertible, it can be verified that $$T^d=\begin{bmatrix} A^d ~&~ 0 \\ 0 ~&~ D^{-1} \end{bmatrix}: \begin{bmatrix} \Hc_1 \\ \Hc_2 \end{bmatrix}\to \begin{bmatrix} \Hc_1 \\ \Hc_2 \end{bmatrix}.$$ It is easy to compute that $$A^d=\begin{bmatrix} 1~&~-13/4~&~3\\
0~&~1/2~&~0\\
0~&~1/4~&~0\end{bmatrix}$$ and $$D^{-1}e_n=\Big(\frac{n}{2n-1}\Big) e_n~\text{for}~ n \geq 4.$$ Now will find the class $T\{1\}$. Let $$T^{-}=\begin{bmatrix} X_1 ~&~ X_2 \\ X_3 ~&~ X_4 \end{bmatrix}: \begin{bmatrix} \Hc_1 \\ \Hc_2 \end{bmatrix}\to \begin{bmatrix} \Hc_1 \\ \Hc_2 \end{bmatrix}$$
In view of \eqref{ex1} and $T T^{-} T=T$ with easy computations we get that $X_1\in A\{1\},~AX_2=0,~X_3A=0$ and $X_4=D^{-1}$. Thus
$$T^{GD-}=T^d T T^-=\begin{bmatrix} A^d A A^{-} ~&~ 0 \\ X_3 ~&~ D^{-1} \end{bmatrix}: \begin{bmatrix} \Hc_1 \\ \Hc_2 \end{bmatrix}\to \begin{bmatrix} \Hc_1 \\ \Hc_2 \end{bmatrix}.$$  Hence the class of GD1 inverses of $T$ is parameterized by $A^{-}\in A\{1\}$ and $X_3\in \B(\Hc_1, \Hc_2)$ with $X_3A=0.$

\subsection{Characterizations of GD1 inverses}
The GD1 inverse can be obtained from outer inverse ($X$ is called an outer inverse of $T$ if $XTX=X$) with a prescribed range and null space, as  presented below.
 \begin{theorem}\label{thm2}
Let $T\in \B(\Hc)$ be generalized Drazin invertible which has closed range. Then 
\begin{enumerate}[(i)]
    \item $T^{GD-}T=T^dT$ is a projector onto $\R(T^dT)$ along $\N(T^dT)$.
    \item $TT^{GD-}$ is a projector onto $\R(TT^dT)$ along $\N(T^dT^-)$.
    \item  $T^{GD-}TT^{GD-}=T^{GD-}$, $\R(T^{GD-})=\R(T^dT)$ and $\N(T^{GD-})=\N(T^dT^{-})$.
\end{enumerate}
\end{theorem}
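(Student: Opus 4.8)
The plan is to derive all three items from two elementary identities: the inner-inverse law $TT^{-}T = T$ and the generalized-Drazin laws $T^{d}TT^{d} = T^{d}$ together with the commutativity $TT^{d} = T^{d}T$. Throughout I would use the explicit representation $T^{GD-} = T^{d}TT^{-}$ supplied after Definition \ref{defn-1}.

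For item (i), I would first collapse the product with the inner-inverse law: $T^{GD-}T = T^{d}TT^{-}T = T^{d}(TT^{-}T) = T^{d}T$. To see that $T^{d}T$ is idempotent, compute $(T^{d}T)^{2} = T^{d}TT^{d}T = (T^{d}TT^{d})T = T^{d}T$. Since any idempotent $E \in \B(\Hc)$ is precisely the projector onto $\R(E)$ along $\N(E)$, this immediately gives that $T^{d}T$ is the projector onto $\R(T^{d}T)$ along $\N(T^{d}T)$.

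Item (ii) is essentially a restatement of the defining relation of the GD1 inverse: by Definition \ref{defn-1} we have $TT^{GD-} = P_{\R(TT^{d}T),\N(T^{d}T^{-})}$, which is by construction the projector onto $\R(TT^{d}T)$ along $\N(T^{d}T^{-})$. If a self-contained verification is preferred, I would check idempotency directly by setting $E = TT^{d}$ and observing $TT^{-}E = TT^{-}TT^{d} = TT^{d} = E$, so that $(ETT^{-})^{2} = ETT^{-}$; the range and null space are then read off from the computations already carried out in the proof of Proposition \ref{prop-1}, namely $\R(TT^{GD-}) = \R(TT^{d}T)$ (from $TT^{d}T = TXT$) and $\N(TT^{GD-}) = \N(T^{d}T^{-})$.

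For item (iii), the outer-inverse equation follows at once from (i): $T^{GD-}TT^{GD-} = (T^{d}T)T^{GD-} = T^{d}T\,T^{d}TT^{-} = (T^{d}TT^{d})TT^{-} = T^{d}TT^{-} = T^{GD-}$. The range equality combines two inclusions, one from each of the preceding parts: $\R(T^{GD-}) \subset \R(TT^{d}) = \R(T^{d}T)$ holds by Definition \ref{defn-1}, while $\R(T^{d}T) = \R(T^{GD-}T) \subset \R(T^{GD-})$ uses item (i). The null-space equality is the one genuinely non-formal step, and I expect it to be the main obstacle, since it requires both inclusions rather than a single computation. I would obtain $\N(T^{GD-}) \subset \N(T^{d}T^{-})$ from the identity $T^{d}T^{-} = (T^{d})^{2}TT^{GD-}$ (so any vector killed by $T^{GD-}$ is killed by $T^{d}T^{-}$), and the reverse inclusion $\N(T^{d}T^{-}) \subset \N(T^{GD-})$ from $T^{GD-} = T^{d}TT^{-} = T(T^{d}T^{-})$ (so any vector annihilated by $T^{d}T^{-}$ is annihilated by $T^{GD-}$). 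Both of these identities are already present in the proof of Proposition \ref{prop-1}, so the remaining work is only to organize the two inclusions cleanly.
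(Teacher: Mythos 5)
Your proof is correct and takes essentially the same route as the paper's: each part is reduced to the identities $TT^{-}T=T$, $T^{d}TT^{d}=T^{d}$ and $TT^{d}=T^{d}T$ applied to the representation $T^{GD-}=T^{d}TT^{-}$, with the range and null-space identifications coming from the same factorizations ($T^{d}T=T^{GD-}T$, $T^{GD-}=T(T^{d}T^{-})$, $T^{d}T^{-}=(T^{d})^{2}TT^{GD-}$) already recorded in Proposition \ref{prop-1} and reused in the paper's own argument. No changes are needed.
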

\begin{proof}
(i) It is trivial.\\
(ii) Using the representation of $T^{GD-}$, we obtain
\begin{equation}\label{eqn-9}
(TT^{GD-})^2= TT^dTT^-TT^dTT^-=TT^dTT^dTT^-=TT^dTT^-=TT^{GD-}
\end{equation}
and
\begin{equation}\label{eqq-10}
TT^dT=T^dTT^-T=T^{GD-}T.
\end{equation}
Using \eqref{eqn-9} and  \eqref{eqq-10}, we get $\R(TT^{GD-})=\R(TT^dT)$. Since $TT^{GD-}=T^2T^dT^-$ and $T^dT^-=T^dT^{GD-}$, we have $\N(TT^{GD-})=\N(T^dT^-)$.
\item[(iii)]
Clearly $T^{GD-}TT^{GD-}=T^{GD-}$. From 
$T^{GD-} = T^dTT^-$, $T^dT=T^{GD-}T$, and $T^dT^-=T^dT^{GD-}$, we obtain $\R(T^{GD-})=\R(T^dT)$ and 
$\N(T^{GD-}) = \N(T^dT^-)$.
\qed
\end{proof}
Notice that if $TT^{GD-}=T^{GD-}T$ then $T^{GD-}=T^dTT^{GD-}=T^dT^{GD-}T=T^d$. Conversely, if $T^{GD-}=T^d$, then we can easily verify that $TT^{GD-}=T^{GD-}T$. Thus we state the following result.
\begin{theorem}\label{thm3}
Let $T\in \B(\Hc)$ be generalized Drazin invertible which has closed range. Then $TT^{GD-}=T^{GD-}T$ if and only if $T^{GD-}=T^d$.
\end{theorem}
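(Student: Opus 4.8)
The plan is to prove the two implications separately, noting that the reverse implication is essentially immediate while the forward one carries the content. For the reverse direction I would assume $T^{GD-}=T^d$ and simply invoke the commutativity $TT^d=T^dT$ that is built into the defining conditions \eqref{di} of the generalized Drazin inverse; this gives $TT^{GD-}=TT^d=T^dT=T^{GD-}T$ with no further work.

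For the forward direction I would start from the explicit representation $T^{GD-}=T^dTT^-$ together with the two algebraic identities I expect to be the workhorses: the absorption law $T^dTT^d=T^d$ (which is exactly the outer-inverse condition $XTX=X$ in \eqref{di} applied to $X=T^d$) and the inner-inverse law $TT^-T=T$. The first step is to record the auxiliary identity $T^{GD-}=T^dTT^{GD-}$; this follows by substituting the representation and regrouping the leading three factors via $T^dTT^d=T^d$. Assuming now the hypothesis $TT^{GD-}=T^{GD-}T$, I would rewrite $T^dTT^{GD-}=T^d(TT^{GD-})=T^d(T^{GD-}T)=T^dT^{GD-}T$. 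Finally I would collapse $T^dT^{GD-}T$ to $T^d$ by substituting $T^{GD-}=T^dTT^-$ again, grouping the trailing three factors through $TT^-T=T$ to reach $(T^d)^2T$, and then applying $T^dTT^d=T^d$ once more. Chaining these equalities yields $T^{GD-}=T^d$.

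The main thing to get right is the bookkeeping of factor order, since the inner inverse $T^-$ commutes with nothing and only the specific products $T^dTT^d$ and $TT^-T$ simplify. In particular, the two collapses must be organized so that the simplifiable triple appears as a contiguous block, namely $T^dTT^d$ at the front of $T^dTT^{GD-}$ and $TT^-T$ at the back of $T^dT^{GD-}T$, before any cancellation is invoked. I do not anticipate a genuine obstacle beyond this careful grouping; all the identities needed have already been established for the generalized Drazin inverse and, via Theorem \ref{thm2}, for $T^{GD-}$.
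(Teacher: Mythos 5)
Your proof is correct and follows essentially the same route as the paper, which establishes the forward direction via the identical chain $T^{GD-}=T^dTT^{GD-}=T^dT^{GD-}T=T^d$ and dismisses the converse as immediate from $TT^d=T^dT$. You merely spell out the intermediate collapses ($T^dTT^d=T^d$ and $TT^-T=T$) in more detail than the paper does.
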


\begin{theorem}\label{thm4}
Let $T\in \B(\Hc)$ be generalized Drazin invertible which has closed range. Then $X=T^{d}T T^-$ is the unique solution of operator equations: \begin{equation}\label{Ope}
XTX=X, ~~ T^dX =T^d T^-,~~\text{and}~~XT=T^d T.
\end{equation}
\end{theorem}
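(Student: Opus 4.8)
The plan is to prove existence and uniqueness as two separate tasks. For existence I will substitute $X = T^d T T^- = T^{GD-}$ into each of the three equations and verify them using only the elementary identities of the generalized Drazin inverse together with the defining property $T T^- T = T$ of the inner inverse. For uniqueness I will extract from the three equations a single self-reproducing identity that pins $X$ down completely, and then kill the difference of two putative solutions.

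First, for the verification of $X = T^d T T^-$. The equation $XTX = X$ is exactly the outer-inverse property $T^{GD-} T T^{GD-} = T^{GD-}$ already recorded in Theorem \ref{thm2}(iii), so nothing new is required there. For $XT = T^d T$, I would compute $XT = T^d T T^- T$ and collapse $T^- T$ via $T T^- T = T$, which gives $XT = T^d T$ immediately. For $T^d X = T^d T^-$, I would compute $T^d X = (T^d)^2 T T^-$ and invoke the standard identity $(T^d)^2 T = T^d$ (which itself comes from $T^d T T^d = T^d$ together with the commutativity $T^d T = T T^d$), yielding $T^d X = T^d T^-$.

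The substance of the argument is the uniqueness part. Suppose $X_1$ and $X_2$ both satisfy the three equations. The key observation is that each solution must obey the absorbing relation $X_i = T^d T\, X_i$: starting from $X_i = X_i T X_i$ and substituting $X_i T = T^d T$ gives $X_i = (X_i T) X_i = T^d T\, X_i$. Now set $D = X_1 - X_2$. Since the equations $X_i T = T^d T$ and $T^d X_i = T^d T^-$ read identically for $i=1,2$, we have $T^d D = 0$, and the absorbing relation gives $D = T^d T\, D$. Using commutativity $T^d T = T T^d$ I can rewrite $D = T^d T\, D = T\,(T^d D) = 0$, hence $X_1 = X_2$.

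The whole argument is short, and the only step I would flag as the crux is spotting that the outer-inverse equation $XTX=X$ combines with $XT = T^d T$ to force the self-reproducing identity $X = T^d T\, X$; once that is in hand, commutativity and $T^d D = 0$ finish the matter in one line. Notably, this route is self-contained and needs no appeal to the operator-matrix decompositions \eqref{cqnd} or \eqref{crd}.
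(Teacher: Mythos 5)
Your proposal is correct and follows essentially the same route as the paper: the existence part is the identical substitution-and-simplify verification (citing Theorem \ref{thm2} for $XTX=X$ and $XT=T^dT$, and computing $T^dX=(T^d)^2TT^-=T^dT^-$), and your uniqueness argument via $X_i=T^dT\,X_i$, commutativity, and $T^dX_1=T^dX_2$ is just a subtract-first rearrangement of the paper's one-line chain $X_1=X_1TX_1=T^dTX_1=TT^dT^-=TT^dX_2=X_2TX_2=X_2$.
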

\begin{proof}
In view of Theorem \ref{thm2} (i) and (iii), $X=T^d T T^-$ satisfies $XT=T^d T$ and $XTX=X$. For $X=T^d T T^-$, we have that $T^d X=(T^d)^2 T T^-=T^d T^-$. Therefore $X=T^d T T^-$ satisfies the operator equations \eqref{Ope}.

Now we will prove the uniqueness. Suppose there exists $X_1$ and $X_2$ which satisfies the operator equations \eqref{Ope}. Then
\begin{align*}
X_1=X_1 T X_1=T^d T X_1=T T^d T^-=T T^d X_2=X_2 T X_2=X_2.    \end{align*}
\end{proof}

The idempotent property of GD1 inverse is discussed in the below result.
\begin{proposition}\label{p3}
 Let $T\in \B(\Hc)$ be generalized Drazin invertible which has closed range.  Then
 \begin{itemize}
     \item[(i)] $(T^{GD-})^2=T^{d}T^{-}$.
     \item[(ii)] $T^{GD-}$ is idempotent  if and only if $T^{GD-}=T^{d}T^{-}$ if and only if $T^{GD-}=TT^{GD-}$.
 \end{itemize}
  \end{proposition}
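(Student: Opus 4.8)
The plan is to prove (i) by a direct algebraic computation from the explicit representation $T^{GD-}=T^dTT^-$ of Definition \ref{defn-1}, and then to read off both equivalences in (ii) as essentially formal consequences of (i) together with the identities already recorded in Theorem \ref{thm2}. Before starting I would isolate the two auxiliary identities that do the real work: $(T^d)^2T=T^d$ and $TT^-T^d=T^d$. The first is the standard consequence of the generalized-Drazin relations $T^dTT^d=T^d$ and $TT^d=T^dT$. For the second I would first observe $T^d=T^dTT^d=T(T^d)^2$ (again using commutativity), so that $\R(T^d)\subseteq\R(T)$; since $TT^-$ fixes $\R(T)$ because $TT^-T=T$, it follows that $TT^-T^d=(TT^-T)(T^d)^2=T(T^d)^2=T^d$.

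With these in hand, (i) is a one-line regrouping. Writing $(T^{GD-})^2=T^dTT^-T^dTT^-$ and collapsing the inner block via $TT^-T^d=T^d$ gives $(T^d)^2TT^-$, and a final application of $(T^d)^2T=T^d$ yields $T^dT^-$, which is exactly the claim.

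For (ii) the first equivalence is immediate from (i): since $(T^{GD-})^2=T^dT^-$, idempotency of $T^{GD-}$ is literally the assertion $T^{GD-}=T^dT^-$. For the second equivalence I would argue in a short cycle, using the (trivial, by commutativity) identity $T^{GD-}=T^dTT^-=TT^dT^-$. Assuming $T^{GD-}=T^dT^-$, left-multiplication by $T$ turns this into $TT^{GD-}=TT^dT^-=T^{GD-}$. Conversely, assuming $T^{GD-}=TT^{GD-}$, I would substitute this into one factor of the square and invoke (i): $(T^{GD-})^2=T(T^{GD-})^2=T(T^dT^-)=TT^dT^-=T^{GD-}$, so $T^{GD-}$ is idempotent, returning us to the first equivalence. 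These implications close the loop and give all three conditions equivalent.

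The main obstacle is not any single computation but securing the two auxiliary identities cleanly, above all $TT^-T^d=T^d$, since everything downstream is bookkeeping once $\R(T^d)\subseteq\R(T)$ is available and $TT^-$ is recognized as the identity there. As a sanity check, and as an alternative route should the operator algebra become unwieldy, I would fall back on the block form of Theorem \ref{thm1}, where $T^{GD-}=\left[\begin{smallmatrix} T_1^{-1} & Y \\ 0 & 0\end{smallmatrix}\right]$; there both $(T^{GD-})^2$ and $T^dT^-$ equal $\left[\begin{smallmatrix} T_1^{-2} & T_1^{-1}Y \\ 0 & 0\end{smallmatrix}\right]$, confirming (i), and each of the three conditions in (ii) reduces to $T_1=I$, confirming their equivalence.
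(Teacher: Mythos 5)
Your proof is correct and follows essentially the same route as the paper: a direct computation from the explicit formula $T^{GD-}=T^dTT^-$ for (i), and a short cycle of implications among the three conditions for (ii). The only cosmetic difference is that the paper simplifies $(T^dTT^-)^2$ by commuting $T^d$ past $T$ and then using $TT^-T=T$, whereas you route the computation through the auxiliary identity $TT^-T^d=T^d$; both are valid.
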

\begin{proof}
(i) $(T^{GD-})^2=T^dTT^-TT^dT^-=T^dTT^dT^-=T^{d}T^{-}$.\\
(ii) Let $T^{GD-}$ be idempotent. Then by part (i), we have 
\begin{center}
  $T^{GD-}=T^dT-$ and $T^{GD-}=TT^dT^-=TT^{GD-}$.  
\end{center}
Conversely, if $T^{GD-}=T^dT-$, then $(T^{GD-})^2=T^dT^-=(T^{GD-})^2$. Further if $T^{GD-}=TT^{GD-}$, then 
\begin{center}
    $T^{GD-}=T (T^d T T^-)=T (T^d T T^d)(T T^-)=T T^d (T T^- T)T^d T T^- =(T T^{GD-}) (T T^{GD-})=(T^{GD-})^2$.
\end{center}
\end{proof}

\begin{theorem}
Let $T\in \B(\Hc)$ be generalized Drazin invertible which has closed range.  Then
\begin{enumerate}[(i)]
    \item $T T^{GD-}=T T^-$ if and only if $T T^d T=T$.
    \item $T T^{GD-}=T T^d $ if and only if $T^{GD-}=T^d$.
    \item $(T^{GD-})^m = (T^d)^{m-1} T^-$ for $m\geq 2$.
\end{enumerate}
\end{theorem}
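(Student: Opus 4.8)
The three parts are largely independent, and in each case one implication is a direct substitution while the reverse implication is obtained by multiplying the hypothesis by $T$ or $T^d$ on a suitable side. Throughout I would rely on the standing representation $T^{GD-}=T^dTT^-$, the identities $T^{GD-}T=T^dT$ and $TT^{GD-}=T^2T^dT^-$ from Theorem \ref{thm2}, the generalized Drazin identities $T^dTT^d=T^d$ and $TT^d=T^dT$, and the inner-inverse identity $TT^-T=T$ (available because $T$ has closed range). For part (i), the direction $TT^dT=T\Rightarrow TT^{GD-}=TT^-$ is immediate: writing $TT^{GD-}=TT^dTT^-=(TT^dT)T^-$ and substituting $TT^dT=T$ yields $TT^-$. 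For the converse I would multiply the hypothesis $TT^{GD-}=TT^-$ on the right by $T$ and use $TT^-T=T$ together with $T^{GD-}T=T^dT$, obtaining $TT^dT=T(T^{GD-}T)=T(TT^{GD-})T\cdot$-style regrouping $=TT^-T=T$.

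For part (ii), the forward implication $T^{GD-}=T^d\Rightarrow TT^{GD-}=TT^d$ is trivial. For the converse, starting from $TT^{GD-}=TT^d$ I would multiply on the left by $T^d$. The right-hand side collapses via $T^dTT^d=T^d$, while the left-hand side simplifies, using $T^dTT^d=T^d$ again, to $T^dTT^{GD-}=(T^dTT^d)TT^-=T^dTT^-=T^{GD-}$. Comparing the two sides gives $T^{GD-}=T^d$.

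Part (iii) I would prove by induction on $m$, the base case $m=2$ being exactly Proposition \ref{p3}(i), i.e. $(T^{GD-})^2=T^dT^-$. The inductive engine is the single identity $T^{GD-}T^d=(T^d)^2$, from which $T^{GD-}(T^d)^{k}=(T^d)^{k+1}$ follows by an immediate secondary induction, and then $(T^{GD-})^{m+1}=T^{GD-}(T^{GD-})^m=T^{GD-}(T^d)^{m-1}T^-=(T^d)^mT^-$ closes the argument. The main obstacle is precisely establishing $T^{GD-}T^d=(T^d)^2$, because $T^-$ is an arbitrary inner inverse that does not interact simply with $T^d$; the trick is to route through the Drazin identity $T^d=T(T^d)^2$ so that the inner-inverse relation $TT^-T=T$ becomes applicable:
\begin{equation*}
T^{GD-}T^d=T^dTT^-T^d=T^dTT^-T(T^d)^2=T^d(TT^-T)(T^d)^2=T^dT(T^d)^2=(T^dTT^d)T^d=(T^d)^2.
\end{equation*}
As a sanity check I would verify the closed formula against the core-quasinilpotent matrix forms of Theorem \ref{thm1}, where a direct block computation gives $(T^{GD-})^m=\begin{bmatrix}T_1^{-m}&T_1^{-(m-1)}Y\\0&0\end{bmatrix}$, which is readily seen to coincide with $(T^d)^{m-1}T^-$.
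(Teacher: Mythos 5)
Your proposal is correct and follows essentially the same route as the paper: parts (i) and (ii) are handled by the identical substitutions (post-multiplying by $T$, resp.\ pre-multiplying by $T^d$, and using $TT^-T=T$ and $T^dTT^d=T^d$), and part (iii) is the same induction with base case $(T^{GD-})^2=T^dT^-$. The only cosmetic difference is that you package the inductive step as the lemma $T^{GD-}T^d=(T^d)^2$ and multiply on the left, while the paper multiplies on the right and absorbs the same cancellations inline; both computations are valid.
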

\begin{proof}
(i) Let $T T^{GD-}=T T^-$. Then 
\begin{center}
   $TT^dT=TT^dTT^-T=TT^{GD-}T=TT^-T=T$. 
\end{center}
 Conversely, let $T T^d T=T$. Post-multiplying by $T^-$, we obtain $T T^{GD-}= T T^-$.\\
 (ii) Let $TT^{GD-}=TT^d$. Then $T^{GD-}=T^dTT^{GD-}=T^dTT^d=T^d$. The converse part is trivial.\\
 (iii) We prove this identity by induction on $m$. The identity is true for $m$ since 
 \begin{center}
   $(T^{GD-})^2=(T^d T T^-)(T^d T T^-)=T^d(T T^- T) T^d T^-=T^d T^- $.   
 \end{center}
 Now assume the identity is true for $m=k$, that is , $(T^{GD-})^k= (T^d)^{k-1}T^-$. Now 
\begin{align*}
    (T^{GD-})^{k+1}= (T^{GD-})^{k}TT^d T^-
    = (T^d)^{k-1}T^-TT^d T^-=(T^d)^kTT^-TT^dT^-=(T^d)^kT^-.
\end{align*}
\end{proof}

\begin{theorem}
Let $T\in \B(\Hc)$ be generalized Drazin invertible which has closed range. $T^{GD-}$ is idempotent if and only if $T^{GD-}=T^n (T^{GD-})^m$ for any $n\in \mathbb{N}\cup \{0\}$ and $m\in \mathbb{N}$.
\end{theorem}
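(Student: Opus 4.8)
The plan is to establish the equivalence by treating the two implications separately; the reverse implication is essentially immediate, while the forward one rests on combining two structural consequences of idempotency that have already been recorded in Proposition~\ref{p3}. Throughout I read the phrase ``for any $n\in\mathbb{N}\cup\{0\}$ and $m\in\mathbb{N}$'' as a universal quantification, so the identity $T^{GD-}=T^n(T^{GD-})^m$ is required to hold simultaneously for all such $n$ and $m$.

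For the reverse direction, I would simply specialize the hypothesis to the single instance $n=0$, $m=2$. Since $T^0=I$, this collapses to $T^{GD-}=(T^{GD-})^2$, which is precisely the statement that $T^{GD-}$ is idempotent. No further work is needed here, so the converse is genuinely trivial once the right instance is chosen.

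For the forward direction, I would assume $T^{GD-}$ is idempotent and exploit two separate facts. First, by Proposition~\ref{p3}(ii), idempotency of $T^{GD-}$ is equivalent to the relation $T^{GD-}=TT^{GD-}$; iterating this identity gives $T^{GD-}=T^nT^{GD-}$ for every $n\in\mathbb{N}\cup\{0\}$ by a routine induction (the base case $n=0$ being trivial, and the inductive step following from $T^{k+1}T^{GD-}=T(T^kT^{GD-})=TT^{GD-}=T^{GD-}$). Second, idempotency directly yields $(T^{GD-})^m=T^{GD-}$ for every $m\in\mathbb{N}$, again by an easy induction. Multiplying these two observations together gives $T^n(T^{GD-})^m=T^nT^{GD-}=T^{GD-}$ for all admissible $n$ and $m$, as required.

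The only point requiring any care, and hence the closest thing to an obstacle, is recognizing that the two effects decouple cleanly: left-multiplication by powers of $T$ is absorbed because idempotency forces $TT^{GD-}=T^{GD-}$, while the exponent $m$ is absorbed because idempotency collapses all powers of $T^{GD-}$ back to itself. Once this separation is seen, both inductions are mechanical, and the selection $(n,m)=(0,2)$ for the converse is the natural choice that isolates plain idempotency out of the whole family of identities.
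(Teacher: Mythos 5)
Your proof is correct and follows essentially the same route as the paper: both directions hinge on Proposition~\ref{p3}(ii), with the forward implication absorbing powers of $T$ via $TT^{GD-}=T^{GD-}$ and powers of $T^{GD-}$ via idempotency. The only (immaterial) difference is in the converse, where you specialize to $(n,m)=(0,2)$ to read off idempotency directly, whereas the paper takes $(n,m)=(1,1)$ and then applies Proposition~\ref{p3}(ii) once more.
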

\begin{proof}
Suppose that $T^{GD-}$ is idempotent. Let $n\in \mathbb{N}\cup \{0\}$ and $m\in \mathbb{N}$. In view of Proposition \ref{p3} (ii), we have $T^n (T^{GD-})^m=T^n T^{DG-}=T^{DG-}$. Converse follows from Proposition \ref{p3} by taking $n=1$ and $m=1.$
\end{proof}
\begin{remark}
If $T$ is idempotent element then then $T$ is generalized Drazin invertible and $T^d=T$. Indeed, if $T$ is idempotent then for $X=T$ we have $XTX=X, XT=TX$ and $T-T^2X=0$ is quasinilpotent. 
\end{remark}
Next we will prove that GD1 inverse of $T$ is the Generalized Drazin inverse of $T^2 T^{-1}$. In order to that we need the following proposition:
\begin{proposition}\label{prop-4}
If $T\in \B(\Hc)$ is generalized Drazin invertible with closed range and $X$ is any solution of operator equations:
\begin{equation}\label{Ope2}
    XTX=X~~\text{and}~~ XT=T^d T,
\end{equation}
then $T^2(X-T^-)$ is quasinilpotent. In particular $(T^dT-I)T^2 T^-$ is quasinilpotent.
\end{proposition}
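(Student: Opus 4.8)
The plan is to carry out the whole argument inside the core-quasinilpotent decomposition \eqref{cqnd}, writing $T=\begin{bmatrix} T_1 & 0 \\ 0 & T_2\end{bmatrix}$ on $\N(P)\oplus\R(P)$ with $T_1$ invertible and $T_2$ quasinilpotent, so that $T^d=\begin{bmatrix} T_1^{-1} & 0 \\ 0 & 0\end{bmatrix}$ and $T^dT=\begin{bmatrix} I & 0 \\ 0 & 0\end{bmatrix}$, and using the block form \eqref{1i} of $T^-$. The first step is to pin down the block form of an arbitrary solution $X=(X_{ij})$ of \eqref{Ope2}. The equation $XT=T^dT$ forces $X_{11}=T_1^{-1}$ and $X_{21}=0$ (invertibility of $T_1$), together with $X_{12}T_2=0$ and $X_{22}T_2=0$. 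Substituting this into $XTX=X$ and using $X_{12}T_2=X_{22}T_2=0$ collapses the second column, producing $X_{22}T_2X_{22}=0$ on the diagonal and hence $X_{22}=0$. Thus every solution has the form $X=\begin{bmatrix} T_1^{-1} & X_{12} \\ 0 & 0\end{bmatrix}$ with $X_{12}T_2=0$.

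Next I would compute $N:=T^2(X-T^-)$ in blocks. From \eqref{1i}, $X-T^-=\begin{bmatrix} 0 & X_{12}-Y \\ -Z & -T_2^-\end{bmatrix}$, and since $T^2=\begin{bmatrix} T_1^2 & 0 \\ 0 & T_2^2\end{bmatrix}$ we obtain $N=\begin{bmatrix} 0 & W \\ -T_2^2Z & -T_2^2T_2^-\end{bmatrix}$ with $W=T_1^2(X_{12}-Y)$. Here the two range conditions from \eqref{1i} do the essential work: $\R(Z)\subset\N(T_2)$ gives $T_2Z=0$, so the $(2,1)$ block $T_2^2Z$ vanishes and $N$ is block upper triangular; and $\R(T_2)\subset\N(Y)$ (i.e. $YT_2=0$) together with $X_{12}T_2=0$ gives $(X_{12}-Y)T_2=0$, hence $WT_2=0$.

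The key quasinilpotency computation is then for $M:=-T_2^2T_2^-$. Writing $E:=T_2T_2^-$, which is idempotent, and using $ET_2=T_2T_2^-T_2=T_2$, a one-line induction gives $(T_2E)^n=T_2^nE$, so $M^n=(-1)^nT_2^nE$ and $\|M^n\|^{1/n}\le\|T_2^n\|^{1/n}\|E\|^{1/n}\to0$ by quasinilpotency of $T_2$ (with $\|E\|^{1/n}\to1$). Combined with $WT_2=0$, the coupling block dies after one power, since $WM^{n-1}=(-1)^{n-1}WT_2^{n-1}E=0$ for $n\ge2$; hence $N^n=\begin{bmatrix} 0 & 0 \\ 0 & M^n\end{bmatrix}$ for $n\ge2$ and $\|N^n\|^{1/n}=\|M^n\|^{1/n}\to0$, so $N$ is quasinilpotent. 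The ``in particular'' clause follows by taking $X=T^{GD-}=T^dTT^-$, which solves \eqref{Ope2} by Theorem \ref{thm4}; then $T^2(X-T^-)=T^2(T^dT-I)T^-=(T^dT-I)T^2T^-$, the last step because $T^dT=TT^d$ commutes with $T^2$.

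The main obstacle I anticipate is not any single estimate but correctly exploiting the two range conditions built into \eqref{1i}: they are exactly what makes $N$ upper triangular and annihilates the coupling block $W$ after a single factor of $T_2$, reducing the whole problem to the single operator $T_2^2T_2^-$. The crux is the identity $ET_2=T_2$, which upgrades $(T_2^2T_2^-)^n$ to $T_2^nE$ up to sign; once this is in hand the spectral-radius bound, and hence quasinilpotency, is immediate.
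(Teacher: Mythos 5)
Your proof is correct, but it takes a genuinely different route from the paper's. The paper disposes of the whole statement in three lines by a spectral-permanence argument: writing $T^2(X-T^-)=T\cdot\bigl(T(X-T^-)\bigr)$ and using $\sigma(AB)\cup\{0\}=\sigma(BA)\cup\{0\}$, it gets $\sigma(T^2(X-T^-))\cup\{0\}=\sigma(TXT-TT^-T)\cup\{0\}=\sigma(T^2T^d-T)\cup\{0\}=\{0\}$, since $T-T^2T^d$ is quasinilpotent by the very definition of $T^d$ and $TXT=TT^dT$ follows from $XT=T^dT$; no decomposition and no structure theory for $X$ or $T^-$ is needed. You instead work entirely inside the core-quasinilpotent decomposition \eqref{cqnd}: you first classify \emph{all} solutions of \eqref{Ope2} as $X=\bigl[\begin{smallmatrix}T_1^{-1}&X_{12}\\0&0\end{smallmatrix}\bigr]$ with $X_{12}T_2=0$ (a byproduct of independent interest, consistent with \eqref{eqqq-7}), then reduce $T^2(X-T^-)$ to the single corner $-T_2^2T_2^-$ via the range conditions in \eqref{1i}, and finish with the identity $(T_2^2T_2^-)^n=T_2^{n+1}T_2^-$ and the spectral radius formula. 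Your approach is longer but more explicit and yields structural information; the paper's is shorter, works verbatim for an arbitrary solution $X$ of \eqref{Ope2} without any block bookkeeping, and pinpoints that the only real input is the quasinilpotency of the quasinilpotent part of $T$. Two small polishing points in your write-up: the $(2,2)$-entry argument should read ``the $(2,2)$-entry of $XTX$ is $X_{22}T_2X_{22}=0$ and must equal $X_{22}$'' (as stated, $X_{22}T_2X_{22}=0$ alone does not give $X_{22}=0$); and since $\Hc=\N(P)\oplus\R(P)$ need not be orthogonal, $\|N^n\|^{1/n}$ is only comparable to $\|M^n\|^{1/n}$ up to a constant depending on $\|P\|$, which of course still tends to $0$.
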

\begin{proof}
Since a bounded operator is quasinipotent if and only if its spectrum is $\{0\}$, it is enough to prove that $\sigma(T^2(X-T^-))\cup\{0\}=\{0\}$.
$$\sigma(T^2(X-T^-))\cup\{0\}=\sigma(T(X-T^-)T)\cup\{0\}=\sigma(TXT-T)\cup\{0\}=\sigma(T^2T^d-T))\cup\{0\}=\{0\},$$ since $T^2 T^d-T$ is quasinilpotent.

We know that $X=T^d T T^-$ satisfies operator equations \eqref{Ope2} hence $T^2(T^dTT^--T^-)=(T^dT-I)T^2 T^-$ is quasinipotent.
\end{proof}

\begin{theorem}\label{thm7}
Let $T\in \B(\Hc)$ be generalized Drazin invertible which has closed range. Then $T^2T^-$ is generalized Drazin invertible and $(T^2 T^-)^d=T^{GD-}$.
\end{theorem}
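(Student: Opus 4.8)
The plan is to verify that $X := T^{GD-}$ satisfies the three defining relations of the generalized Drazin inverse of $S := T^2 T^-$, namely $XSX = X$, $SX = XS$, and $S - S^2X$ quasinilpotent; uniqueness of the generalized Drazin inverse then forces $(T^2T^-)^d = T^{GD-}$. Since the products of the form $T^- T^d$ that arise in a direct symbolic attack do not simplify using the inner-inverse and commuting relations alone, I would carry out the computation in the core-quasinilpotent block decomposition of Theorem \ref{thm1}, where every block is explicit.

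First I would record the block forms. Writing $T=\begin{bmatrix} T_1 & 0 \\ 0 & T_2\end{bmatrix}$, $T^d=\begin{bmatrix} T_1^{-1} & 0 \\ 0 & 0\end{bmatrix}$, $T^{-}=\begin{bmatrix} T_1^{-1} & Y \\ Z & T_2^-\end{bmatrix}$ with $YT_2=0$, $T_2Z=0$, $T_2^-\in T_2\{1\}$, and $T^{GD-}=\begin{bmatrix} T_1^{-1} & Y \\ 0 & 0\end{bmatrix}$, a direct multiplication gives
\begin{equation*}
S = T^2 T^- = \begin{bmatrix} T_1 & T_1^2 Y \\ 0 & N \end{bmatrix}, \qquad N := T_2^2 T_2^-,
\end{equation*}
where the lower-left block vanishes because $T_2^2 Z = T_2(T_2 Z)=0$. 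The structural facts I would extract are that $T_1$ is invertible and that $YT_2=0$ propagates to $YN = Y T_2^2 T_2^- = 0$.

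Next I would check the three relations. Using $YN=0$ one finds $SX = XS = \begin{bmatrix} I & T_1 Y \\ 0 & 0\end{bmatrix}$, which is the commuting condition, and then $XSX=X$ and $S^2X = \begin{bmatrix} T_1 & T_1^2 Y \\ 0 & 0\end{bmatrix} = T^2 T^{GD-}$ follow immediately, so that
\begin{equation*}
S - S^2 X = \begin{bmatrix} 0 & 0 \\ 0 & N \end{bmatrix}.
\end{equation*}
It remains only to see that this operator is quasinilpotent, i.e. that $N = T_2^2 T_2^-$ is quasinilpotent.

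The quasinilpotency of $N$ is the one genuinely non-formal step and is where I expect the only real obstacle. I would settle it by the spectral permutation identity $\sigma(AB)\cup\{0\}=\sigma(BA)\cup\{0\}$: since $\sigma(T_2^2 T_2^-)\cup\{0\} = \sigma(T_2 T_2^-\cdot T_2)\cup\{0\} = \sigma(T_2)\cup\{0\} = \{0\}$ because $T_2$ is quasinilpotent, the operator $N$ has spectrum $\{0\}$ and is quasinilpotent. Alternatively, and perhaps more cleanly, the identity $S - S^2X = -\,T^2(T^{GD-}-T^-)$, which follows from $S^2 X = T^2 T^{GD-}$ above, lets one invoke Proposition \ref{prop-4} directly: $T^{GD-}$ satisfies the hypotheses \eqref{Ope2} by Theorem \ref{thm4}, so $T^2(T^{GD-}-T^-)$ is quasinilpotent. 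Either way all three defining conditions hold, and uniqueness of the generalized Drazin inverse yields $(T^2 T^-)^d = T^{GD-}$.
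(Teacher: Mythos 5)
Your proof is correct and follows essentially the same route as the paper: both verify the three defining relations of the generalized Drazin inverse of $T^2T^-$ against $X=T^{GD-}$ and conclude by uniqueness, with the quasinilpotency of $T^2T^--(T^2T^-)^2T^{GD-}$ settled by Proposition \ref{prop-4} (equivalently, by the spectral permutation identity $\sigma(AB)\cup\{0\}=\sigma(BA)\cup\{0\}$, which is also what underlies that proposition). The only difference is that you carry out the algebra in the core--quasinilpotent block decomposition of Theorem \ref{thm1}, whereas the paper manipulates the formula $T^{GD-}=T^dTT^-$ directly; both computations check out.
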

\begin{proof}
We need to claim that
$
T^{GD-}(T^2T^-) T^{GD-}=T^{GD-},~ (T^2T^-) T^{GD-}=T^{GD-}(T^2T^-),$
and $T^2T^- -(T^2T^-)^2 T^{GD-}$ is quasinilpotent. Using the definition of $T^{GD-}$, we verify that 
$$T^{GD-}(T^2T^-) T^{GD-}=T^d T T^-(T^2T^-)T^d T T^-=(T^d)^2 T^2 T^-=T^{GD-},$$
and
$$(T^2T^-) T^{GD-}=(T^2 T^-) (T^d T T^-)=T^2 T^d T^-=T^d T^2 T^-=(T^d T T^-)(T^2 T^-)=T^{GD-}(T^2 T^- ).$$ 
In view of Proposition \ref{prop-4},  
$$ T^2T^--(T^2T^-)^2 T^{GD-}=T^2T^--(T^2T^-)(T^2 T^-) T^d T T^-=T^2T^--T^3 T^d T^-=(I-T^d T) (T^2 T^-)$$ is quasinilpotent.
\end{proof}

\begin{remark}
We can also prove Theorem \ref{thm7} using the Cline’s formula of generalized Drazin invertibility (see \cite{Jchen}, Theorem 2.1), which is stated below.
\begin{lemma}\label{l1}
Let $T\in \B(\Hc, \K)$ and $S\in \B(\K, \Hc)$. If $TS$ is generalized Drazin invertible in $\B(\K)$, then so is $ST$ in $\B(\Hc)$ and 
$$(ST)^d=S ((TS)^d)^2 T. $$
\end{lemma}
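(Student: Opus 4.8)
The plan is to verify directly that $X := S\big((TS)^d\big)^2 T$ satisfies the three defining conditions of the generalized Drazin inverse of $ST$ listed in \eqref{di}, and then invoke uniqueness. To keep the bookkeeping manageable I would abbreviate $A := TS \in \B(\K)$ and $W := (TS)^d = A^d \in \B(\K)$, so that $X = SW^2 T$, and extract first the algebraic identities on which everything rests. From $WAW = W$ together with $AW = WA$ one obtains $AW^2 = W^2 A = W$; these two identities, which say that $W^2$ absorbs one factor of $A$ back into $W$, are the workhorses of the entire computation.

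With these in hand the first two axioms are routine. For the outer-inverse condition I would compute $X(ST)X = SW^2 A^2 W^2 T$ and simplify $W^2 A^2 W^2 = (W^2 A)(A W^2) = W\cdot W = W^2$, giving $X(ST)X = SW^2 T = X$. For the commutativity condition I would show that $(ST)X = S A W^2 T = SWT$ and $X(ST) = S W^2 A T = SWT$, so both products equal $SWT$ and therefore agree.

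The substance of the argument is the third axiom, namely that $ST - (ST)^2 X$ is quasinilpotent, and this is the step I expect to be the main obstacle. A direct computation reduces it to a clean form: since $A^2 W^2 = A(AW^2) = AW$, one gets $(ST)^2 X = S A^2 W^2 T = S(AW)T$, whence $ST - (ST)^2 X = S(I - AW)T$. Writing $E := I - AW \in \B(\K)$, I must show $SET$ is quasinilpotent. The difficulty is that $E$ is in fact an idempotent (because $AW = AA^d$ is idempotent), so $SET$ need not be nilpotent and no norm estimate will do; instead I would transfer quasinilpotence through a spectral-permanence argument of exactly the type already used in Proposition \ref{prop-4}. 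Splitting $SET = S\cdot(ET)$ with $S \in \B(\K,\Hc)$ and $ET \in \B(\Hc,\K)$, the identity $\sigma(PQ)\cup\{0\} = \sigma(QP)\cup\{0\}$ yields $\sigma(SET)\cup\{0\} = \sigma\big((ET)S\big)\cup\{0\} = \sigma(EA)\cup\{0\}$. Since $EA = (I-AW)A = A - A^2 W = TS - (TS)^2(TS)^d$ is quasinilpotent by hypothesis, its spectrum is $\{0\}$, forcing $\sigma(SET) = \{0\}$ and hence $SET$ quasinilpotent.

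Finally I would note that the three verified conditions, together with the uniqueness of the generalized Drazin inverse recorded in Section 2, identify $X = S\big((TS)^d\big)^2 T$ as $(ST)^d$, which completes the proof.
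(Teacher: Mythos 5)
Your verification is correct, but note that the paper does not actually prove this lemma: it is quoted as a known result (Cline's formula for the generalized Drazin inverse) with a citation to Theorem 2.1 of the reference \cite{Jchen}, so there is no in-paper argument to compare against. Your proposal supplies a genuine, self-contained proof. The algebra is sound: from $W(TS)W=W$ and $(TS)W=W(TS)$ you correctly get $(TS)W^2=W^2(TS)=W$, which settles the outer-inverse and commutation axioms for $X=SW^2T$, and the reduction of the third axiom to the quasinilpotence of $S(I-(TS)(TS)^d)T$ is exact. The decisive step — transferring quasinilpotence via $\sigma(PQ)\cup\{0\}=\sigma(QP)\cup\{0\}$ to land on $(I-(TS)(TS)^d)TS = TS-(TS)^2(TS)^d$, which is quasinilpotent by hypothesis — is precisely the spectral-permanence device the paper itself deploys in Proposition \ref{prop-4}, so the argument is fully in the spirit of the surrounding text. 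The only point worth making explicit is that you are implicitly using the uniqueness of the generalized Drazin inverse (Koliha's theorem, recorded in Section 2) to conclude both that $ST$ is generalized Drazin invertible and that $X=(ST)^d$; since satisfying the three conditions in \eqref{di} is by definition what it means for $ST$ to be generalized Drazin invertible with inverse $X$, this is harmless. What your approach buys over the paper's bare citation is a short, elementary, and fully verifiable derivation that requires nothing beyond Jacobson's lemma.
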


\end{remark}

\begin{theorem}\label{g1ep}
Let $T\in \B(\Hc)$ be generalized Drazin invertible which has closed range. Then $$\lim_{n\to \infty}(T^-T^{n+1}T^d-T^{n+1}T^dT^-)=0$$ if and only if $\displaystyle\lim_{n\to \infty}(T^-T^{n+2}T^d-T^{n+1}T^d)=0$ and $\displaystyle \lim_{n\to \infty}(T^{n+2}T^dT^--T^{n+1}T^d)=0$.
\end{theorem}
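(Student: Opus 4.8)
The plan is to collapse the whole equivalence into three purely algebraic operator identities and then read off both implications from the continuity of multiplication by the bounded operator $T$. Abbreviate the three quantities as
$$
L_n = T^-T^{n+1}T^d - T^{n+1}T^dT^-, \quad A_n = T^-T^{n+2}T^d - T^{n+1}T^d, \quad B_n = T^{n+2}T^dT^- - T^{n+1}T^d,
$$
so the claim is $L_n \to 0 \iff (A_n \to 0 \text{ and } B_n \to 0)$. I would first record two simplifications: $T^{n+1}T^dT^-T = T^{n+1}T^d$ and $TT^-T^{n+1}T^d = T^{n+1}T^d$. Each follows by peeling one copy of $T$ off $T^{n+1}T^d$ using the commutation $TT^d=T^dT$ (to write $T^{n+1}T^d = T^nT^dT = T\cdot T^nT^d$) and then collapsing $TT^-T=T$ from the inner-inverse property; for instance $T^{n+1}T^dT^-T = T^nT^d(TT^-T) = T^nT^dT = T^{n+1}T^d$.

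With these in hand, multiplying out gives the key identities $A_n = L_nT$ and $B_n = -TL_n$. Expanding $L_nT$, the first summand becomes $T^-T^{n+1}T^dT = T^-T^{n+2}T^d$ (by $T^{n+1}T^dT=T^{n+2}T^d$) while the second collapses to $T^{n+1}T^d$ by the first simplification, so $L_nT = T^-T^{n+2}T^d - T^{n+1}T^d = A_n$; expanding $TL_n$ is the mirror computation using the second simplification. The forward implication is then immediate: if $L_n\to 0$ then $A_n=L_nT\to 0$ and $B_n=-TL_n\to 0$ since right/left multiplication by $T$ is norm-continuous. For the converse I would use the trivial cancellation $L_{n+1}=A_n-B_n$ — the two copies of $T^{n+1}T^d$ subtract out, leaving exactly $T^-T^{n+2}T^d - T^{n+2}T^dT^- = L_{n+1}$ — so $A_n,B_n\to 0$ forces $L_{n+1}\to 0$, i.e.\ $L_n\to 0$.

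The only genuinely delicate step is the simplification $T^{n+1}T^dT^-T = T^{n+1}T^d$, where $TT^-T=T$ and $TT^d=T^dT$ must be combined in the correct order; everything else is bookkeeping. As a sanity check I would verify the identities against the core-quasinilpotent form \eqref{cqnd}: writing $T^-=\left[\begin{smallmatrix} T_1^{-1} & Y \\ Z & T_2^-\end{smallmatrix}\right]$ one finds $L_n=\left[\begin{smallmatrix} 0 & -T_1^nY \\ ZT_1^n & 0\end{smallmatrix}\right]$, $A_n=\left[\begin{smallmatrix} 0 & 0 \\ ZT_1^{n+1} & 0\end{smallmatrix}\right]$ and $B_n=\left[\begin{smallmatrix} 0 & T_1^{n+1}Y \\ 0 & 0\end{smallmatrix}\right]$, so that $A_n=L_nT$, $B_n=-TL_n$, $L_{n+1}=A_n-B_n$ are visible at a glance (using $YT_2=0$ and $T_2Z=0$ from \eqref{1i}). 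Moreover, since $T_1$ is invertible, $ZT_1^{n+1}\to 0 \iff ZT_1^n\to 0$ and $T_1^{n+1}Y\to 0\iff T_1^nY\to 0$, which confirms independently that $\{L_n\}$ and $\{A_n,B_n\}$ vanish under exactly the same conditions $T_1^nY\to 0$, $ZT_1^n\to 0$.
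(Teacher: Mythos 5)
Your proof is correct and follows essentially the same route as the paper: the forward direction is the paper's computation written as $A_n=L_nT$ and $B_n=-TL_n$ (the paper carries this out with the norm estimates $\|A_n\|\le\|L_n\|\,\|T\|$ and $\|B_n\|\le\|T\|\,\|L_n\|$), and the converse is exactly the paper's splitting $T^-T^{n+2}T^d-T^{n+2}T^dT^-=A_n-B_n$, i.e.\ $L_{n+1}=A_n-B_n$. The block-matrix sanity check is a nice extra but not needed; the core identities and the index shift in the converse are all verified correctly.
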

\begin{proof}
Let $\lim_{n\to \infty}(T^-T^{n+1}T^d-T^{n+1}T^dT^-)=0$. Then 
\begin{eqnarray*}
\|T^-T^{n+2}T^d-T^{n+1}T^d\|&=&\|T^-T^{n+2}T^d-T^{n}T^dT\|=\|T^-T^{n+1}T^dT-T^{n}T^dTT^-T\|\\
&\leq&\|T^-T^{n+1}T^d-T^{n+1}T^dT^-\|\|T\|\to 0 \mbox{ as } n\to \infty
\end{eqnarray*}
and 
\begin{equation*}
\|T^{n+2}T^dT^--T^{n+1}T^d\|\leq\|T\|\|T^-T^{n+1}T^d-T^{n+1}T^dT^-\|\to 0 \mbox{ as } n\to \infty.
\end{equation*}
conversely,
\begin{eqnarray*}
\|T^-T^{n+2}T^d-T^{n+2}T^dT^-\|&=&=\|T^-T^{n+2}T^d-T^{n+1}T^d+T^{n+1}T^d-T^{n+2}T^dT^-\|\\
&\leq& \|T^-T^{n+2}T^d-T^{n+1}T^d\|+\|T^{n+1}T^d-T^{n+2}T^dT^-\|\\
&\to&0 \mbox{ as }n\to\infty.
\end{eqnarray*}
\end{proof}

\section{A binary relation based on GD1 inverse}
In view of gDMP pre-order \cite{mosic18}, we introduce the following binary relation for GD1 inverse.

\begin{definition}\label{gd1rel}
Let $T\in \B(\Hc)$ and $S\in \B(\Hc)$ be generalized Drazin invertible which has closed range. We will say that  $S$ is below $T$  under the relation  $\leq ^ {GD-}$ if $SS^{GD-}= TS^{GD-}$ and $S^{GD-}S = S^{GD-}T$. We denote such relation by $S \leq ^{GD-}T$.
\end{definition}

\begin{proposition}\label{pp-5}
Let $T\in \B(\Hc)$ and $S\in \B(\Hc)$ be generalized Drazin invertible which have closed range. Then
\begin{enumerate}
    \item [(i)] $SS^{GD-}= TS^{GD-}$  if and only if $SS^d = TS^d$.
    \item [(ii)] $S^{GD-}S = S^{GD-}T$ if and only if $S^d = S^dS^{-}T$.
\end{enumerate}
\end{proposition}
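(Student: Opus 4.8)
The plan is to reduce both equivalences to elementary identities for the generalized Drazin inverse by substituting the explicit representation $S^{GD-}=S^dSS^-$ from Definition \ref{defn-1} together with the identity $S^{GD-}S=S^dS$ from Theorem \ref{thm2}(i), and then exploiting the standard relations $S^dS=SS^d$, $S^dSS^d=S^d$, $(S^d)^2S=S^d$, and $SS^-S=S$. No step requires the structural decompositions of Section 2; everything follows from commutation and idempotency.

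For part (i), I first substitute $S^{GD-}=S^dSS^-$ to rewrite the asserted equality $SS^{GD-}=TS^{GD-}$ as $SS^dSS^-=TS^dSS^-$. The converse direction is then immediate: post-multiplying $SS^d=TS^d$ by $SS^-$ yields exactly this identity. For the forward direction the difficulty is that one cannot simply cancel the factor $SS^-$ on the right, since it is only a projector. I would instead cancel in two stages. Post-multiplying $SS^{GD-}=TS^{GD-}$ by $S$ and using $S^{GD-}S=S^dS$ turns it into $SS^dS=TS^dS$; post-multiplying this by $S^d$ and using $S^dSS^d=S^d$ collapses the trailing factor to give $SS^d=TS^d$. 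This two-stage cancellation is the only genuinely delicate point.

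For part (ii), I again use $S^{GD-}S=S^dS$ together with $S^{GD-}T=S^dSS^-T$ to rewrite the relation $S^{GD-}S=S^{GD-}T$ as $S^dS=S^dSS^-T$. To pass to $S^d=S^dS^-T$ I pre-multiply by $S^d$ and invoke $(S^d)^2S=S^d$, which simultaneously simplifies the left side to $S^d$ and, via $(S^d)^2SS^-=S^dS^-$, the right side to $S^dS^-T$. Conversely, starting from $S^d=S^dS^-T$ I pre-multiply by $S$, use $SS^d=S^dS$ to commute the factor $S$ through, and recover $S^dS=S^dSS^-T$, i.e. $S^{GD-}S=S^{GD-}T$.

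I expect the main obstacle to be purely bookkeeping: ensuring every cancellation is justified by an exact identity (never cancelling a bare projector) and, in the forward direction of (i), recognising that right-multiplication by $S$ alone is insufficient and must be followed by right-multiplication by $S^d$ in order to strip off the inner-inverse factor.
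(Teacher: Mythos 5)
Your proof is correct and follows essentially the same route as the paper: both arguments substitute the explicit form $S^{GD-}=S^dSS^-$ and reduce everything to the identities $S^dS=SS^d$, $S^dSS^d=S^d$ and $SS^-S=S$. Your two-stage cancellation in the forward direction of (i) (post-multiplying by $S$ and then by $S^d$) is just the paper's single post-multiplication by $SS^d$ written in two steps, so there is no substantive difference.
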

\begin{proof}
(i) Let $SS^{GD-}= TS^{GD-}$. Then 
\begin{equation*}
    SS^d=SS^{GD-}SS^d=TS^{GD-}SS^d=TS^dSS^-SS^d=TS^dSS^d=TS^d.
\end{equation*}
Conversely, let $SS^d=TS^d$. Then $SS^{GD-}=SS^dSS^-=TS^dSS^-=TS^{GD-}$.\\
(ii) Let $S^{GD-}S = S^{GD-}T$. Then $S^d=S^dSS^d=S^dS^{GD-}S=S^dS^{GD-}T=S^dS^-T$. Conversely, if $S^d=S^dS^-T$ then $S^{GD-}T=SS^dS^-T=SS^d=S^{GD-}S$.
\end{proof}
In view of Definition \ref{gd1rel} and Proposition \ref{pp-5}, we can verify the following result.
\begin{corollary}
Let $T\in \B(\Hc)$ and $S\in \B(\Hc)$ be generalized Drazin invertible which have closed range.  Then the following statements are equivalent:
\begin{enumerate}
\item [(i)] $S\leq ^ {GD-}T$.
    \item [(ii)] $SS^dS = SS^{GD-}T= TSS^d$.
    \item [(iii)] $SS^d =S^{GD-}T=TS^d$.
\end{enumerate}
\end{corollary}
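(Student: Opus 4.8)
The plan is to route everything through statement (iii), proving (i)$\Leftrightarrow$(iii) via Proposition \ref{pp-5} together with the identity $S^{GD-}S=SS^d$, and then (ii)$\Leftrightarrow$(iii) by multiplying through by $S$ and $S^d$ and collapsing the resulting words with the Drazin identities.

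For (i)$\Leftrightarrow$(iii), I would first record that $S^{GD-}S=S^dS=SS^d$: the first equality is Theorem \ref{thm2}(i) applied to $S$, and the second is the commutativity $S^dS=SS^d$ built into the generalized Drazin inverse. Consequently the defining relation $S^{GD-}S=S^{GD-}T$ of $S\leq^{GD-}T$ is literally $SS^d=S^{GD-}T$, the first equality of (iii). By Proposition \ref{pp-5}(i) the companion relation $SS^{GD-}=TS^{GD-}$ is equivalent to $SS^d=TS^d$, the second equality of (iii). Hence $S\leq^{GD-}T$ is equivalent to $SS^d=S^{GD-}T=TS^d$, which is exactly (iii).

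For (iii)$\Rightarrow$(ii), I would simply multiply the two equalities of (iii) by $S$ on the appropriate side. Left-multiplying $SS^d=S^{GD-}T$ by $S$ and using $S\cdot SS^d=S^2S^d=SS^dS$ yields $SS^dS=SS^{GD-}T$; right-multiplying $SS^d=TS^d$ by $S$ and using $S^dS=SS^d$ yields $SS^dS=TSS^d$. Together these are precisely (ii).

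The main obstacle is the reverse implication (ii)$\Rightarrow$(iii), where I must cancel a factor of $S$ to descend from the degree-raised equalities back to those of (iii). I would accomplish this by multiplying by $S^d$ and collapsing, relying on $S^dSS^d=S^d$, $SS^dS^d=S^d$, the idempotency of $SS^d$ (so that $S^2(S^d)^2=SS^d$), and $S^dSS^{GD-}=S^{GD-}$ (which follows from $S^{GD-}=S^dSS^-$). Concretely, left-multiplying $SS^{GD-}T=SS^dS$ by $S^d$ collapses the left-hand side to $S^{GD-}T$ and the right-hand side to $SS^d$, giving $S^{GD-}T=SS^d$; right-multiplying $TSS^d=SS^dS$ by $S^d$ collapses the left-hand side to $TS^d$ and the right-hand side to $SS^d$, giving $TS^d=SS^d$. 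These two equalities are exactly (iii), closing the cycle and establishing the equivalence of (i), (ii), and (iii).
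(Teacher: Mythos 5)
Your proposal is correct: the paper leaves this corollary as a verification ``in view of Definition \ref{gd1rel} and Proposition \ref{pp-5},'' and your argument is exactly that intended route, supplemented by the identity $S^{GD-}S=S^dS=SS^d$ from Theorem \ref{thm2}(i) and the routine cancellations $S^dSS^d=S^d$, $S(S^d)^2=S^d$, $S^dSS^{GD-}=S^{GD-}$ needed for (ii)$\Leftrightarrow$(iii). All the individual collapses check out, so nothing further is needed.
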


Applying the core quasi-nilpotent decomposition, we state the following result in which we can classify all the operators ( say $X_j$ ) such that $T$ is below $X_j$ under the relation $\leq^{GD-}$.
\begin{theorem}
Let $T\in \B(\Hc)$ be generalized Drazin invertible which has closed range and consider the same decomposition as given in Theorem \ref{thm1}, for $T$ and $T^{-}$.  Then the following conditions are equivalent:
\begin{enumerate}[(i)]
\item $T \leq^{GD-}X$.
\item $X=\begin{bmatrix} T_1 ~&~ -T_1YX_4 \\ 0 ~&~ X_4 \end{bmatrix}~:~\begin{bmatrix} \N(P) \\ \R(P) \end{bmatrix} \to \begin{bmatrix} \N(P) \\ \R(P) \end{bmatrix}$, where $X_4\in \B(\R(P))$.
\end{enumerate}
\end{theorem}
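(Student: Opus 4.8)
The plan is to unwind the definition of the relation and then translate everything into the core-quasinilpotent block form, where the claim reduces to a finite set of block-entry identities.

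First I would note that, by Definition \ref{gd1rel} applied with $S=T$ and the upper operator taken to be $X$, the statement $T\leq^{GD-}X$ means precisely the two operator identities $TT^{GD-}=XT^{GD-}$ and $T^{GD-}T=T^{GD-}X$. Thus I only need the GD1 inverse of $T$, namely $T^{GD-}=\begin{bmatrix} T_1^{-1} & Y \\ 0 & 0\end{bmatrix}$ from \eqref{eqqq-7}, together with $T=\begin{bmatrix} T_1 & 0\\ 0 & T_2\end{bmatrix}$, and no information about $X^{GD-}$ enters the computation.

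Next, writing $X=\begin{bmatrix} X_1 & X_2\\ X_3 & X_4\end{bmatrix}$ relative to $\Hc=\N(P)\oplus\R(P)$, I would carry out the two block products. Computing $TT^{GD-}=\begin{bmatrix} I & T_1Y\\ 0 & 0\end{bmatrix}$ and $XT^{GD-}=\begin{bmatrix} X_1T_1^{-1} & X_1Y\\ X_3T_1^{-1} & X_3Y\end{bmatrix}$ and equating, the $(1,1)$ and $(2,1)$ entries (using invertibility of $T_1$) immediately force $X_1=T_1$ and $X_3=0$, after which the remaining two entries become automatic. For the second identity I would invoke the defining relation $\R(T_2)\subset\N(Y)$, i.e. $YT_2=0$, so that $T^{GD-}T=\begin{bmatrix} I & 0\\ 0 & 0\end{bmatrix}$; comparing with $T^{GD-}X=\begin{bmatrix} T_1^{-1}X_1+YX_3 & T_1^{-1}X_2+YX_4\\ 0 & 0\end{bmatrix}$ and substituting the already-obtained $X_1=T_1$, $X_3=0$, the only surviving constraint is $T_1^{-1}X_2+YX_4=0$, that is $X_2=-T_1YX_4$. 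This yields exactly the claimed form with $X_4\in\B(\R(P))$ arbitrary, establishing (i)$\Rightarrow$(ii); the converse (ii)$\Rightarrow$(i) is then a direct substitution of this triangular block form into the two defining identities.

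The computation is routine block algebra, so there is no genuine obstacle; the points that require care are (a) correctly reading off which operator plays the role of $S$ in $T\leq^{GD-}X$, so that only $T^{GD-}$ (and not $X^{GD-}$) appears, (b) invoking $YT_2=0$ at exactly the right place to collapse $T^{GD-}T$ to $\begin{bmatrix} I & 0\\ 0 & 0\end{bmatrix}$, and (c) using the two identities jointly, since the first pins down $X_1$ and $X_3$ and only after feeding these into the second does the constraint $X_2=-T_1YX_4$ emerge. I would also remark that, for the converse to live inside the relation, one should confirm that every $X$ of this triangular form is again generalized Drazin invertible with closed range, its spectral behaviour at $0$ being governed by the block $X_4$ because $T_1$ is invertible; this is implicit in the hypothesis that the relation is defined on such operators.
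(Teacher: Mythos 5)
Your proposal is correct and takes exactly the route the paper intends: the paper states this theorem without proof, remarking only that it follows by ``applying the core quasi-nilpotent decomposition,'' and your block computation of $TT^{GD-}=XT^{GD-}$ and $T^{GD-}T=T^{GD-}X$ (using $YT_2=0$ and the invertibility of $T_1$) supplies precisely that omitted verification. Your closing caveat --- that one should check an arbitrary $X_4\in\B(\R(P))$ keeps $X$ generalized Drazin invertible with closed range so that the relation is even defined --- is a legitimate point the paper itself glosses over.
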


\begin{theorem}\label{pod}
Let $S\in \B(\Hc)$ be generalized Drazin invertible which has closed range. Consider $\|S^d\|\leq 1$ and $\lim_{n\to \infty}(S^-S^{n+1}S^d-S^{n+1}S^dS^-)=0$. Then $S \leq^{GD-} T$ if and only if $S \leq^{d} T$.
\end{theorem}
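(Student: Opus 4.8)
The plan is to show that the two hypotheses on $S$ alone already force $S^{GD-}=S^{d}$; once this is established, the relations $\leq^{GD-}$ and $\leq^{d}$ have literally identical defining equations and the stated equivalence is immediate. Indeed, if $S^{GD-}=S^{d}$, then $S\leq^{GD-}T$ reads $SS^{d}=TS^{d}$ and $S^{d}S=S^{d}T$, which is exactly $S\leq^{d}T$ (for every $T$). Since the hypotheses involve only $S$, the whole content of the theorem reduces to the implication that $\|S^{d}\|\le 1$ together with $\lim_{n\to\infty}(S^{-}S^{n+1}S^{d}-S^{n+1}S^{d}S^{-})=0$ yields $S^{GD-}=S^{d}$.

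The key step I would prove first is the intrinsic identity
\[
S^{GD-}-S^{d}=(S^{d})^{n}\bigl(S^{n+1}S^{d}S^{-}-S^{n}S^{d}\bigr),\qquad n\ge 1.
\]
This is verified purely algebraically from $S^{GD-}=S^{d}SS^{-}$ together with the standard identities $S^{d}S=SS^{d}$, $(S^{d}S)^{n}=S^{d}S$, $(S^{d})^{2}S=S^{d}$ and $S^{2}(S^{d})^{2}=SS^{d}$: these give $(S^{d})^{n}S^{n+1}S^{d}S^{-}=SS^{d}S^{-}=S^{GD-}$ and $(S^{d})^{n}S^{n}S^{d}=S^{d}$. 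The formula also becomes transparent through the core-quasinilpotent decomposition of Theorem \ref{thm1}: writing $S^{-}$ in block form with corner $Y$, the operator $S^{n+1}S^{d}S^{-}-S^{n}S^{d}$ has single nonzero corner $S_{1}^{n}Y$ while $(S^{d})^{n}$ has corner $S_{1}^{-n}$, so both sides equal the block operator with single nonzero entry $Y$, which vanishes precisely when $S^{GD-}=S^{d}$.

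Granting the identity, submultiplicativity and $\|S^{d}\|\le 1$ yield
\[
\|S^{GD-}-S^{d}\|\le\|S^{d}\|^{n}\,\|S^{n+1}S^{d}S^{-}-S^{n}S^{d}\|\le\|S^{n+1}S^{d}S^{-}-S^{n}S^{d}\|.
\]
The second hypothesis, combined with Theorem \ref{g1ep}, gives $\lim_{n\to\infty}(S^{n+2}S^{d}S^{-}-S^{n+1}S^{d})=0$; after the index shift $n\mapsto n+1$ this is precisely $\lim_{n\to\infty}(S^{n+1}S^{d}S^{-}-S^{n}S^{d})=0$. Letting $n\to\infty$ in the displayed inequality forces $\|S^{GD-}-S^{d}\|=0$, i.e.\ $S^{GD-}=S^{d}$, and the theorem follows by the first paragraph.

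I expect the main obstacle to be the discovery and clean verification of the intrinsic identity: the factor $(S^{d})^{n}$ is exactly what is needed so that the bound $\|S^{d}\|\le 1$ can absorb any growth while the vanishing limit of $S^{n+1}S^{d}S^{-}-S^{n}S^{d}$ takes over. An alternative, more computational route is to argue entirely in the core-quasinilpotent decomposition, where the hypothesis limit is equivalent to $S_{1}^{n}Y\to 0$ and $\|S^{d}\|\le 1$ gives $\|(S^{d})^{n}\|\le 1$, whence $\|S_{1}^{-n}\|\le 1$ and $\|Y\|=\|S_{1}^{-n}(S_{1}^{n}Y)\|\le\|S_{1}^{n}Y\|\to 0$, so $Y=0$ and $S^{GD-}=S^{d}$ by Theorem \ref{thm1}. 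The subtlety in that route is that $\N(P)\oplus\R(P)$ need not be an orthogonal decomposition, so one must pass from $\|S^{d}\|\le 1$ to the bound on $S_{1}^{-n}$ through $(S^{d})^{n}$ rather than by reading off a block norm directly; this is precisely the technical point the intrinsic identity handles cleanly.
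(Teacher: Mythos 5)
Your proof is correct, but it is organized differently from the paper's. The paper never establishes $S^{GD-}=S^{d}$; instead it proves the two implications separately, applying the same device you use (insert $(S^d)^{n+1}S^{n+1}$, pull out $(S^d)^{n+1}$, bound by $\|S^d\|^{n+1}\le 1$, and invoke Theorem \ref{g1ep}) to the $T$-dependent differences $S^{d}S-S^{d}T=(S^d)^{n+1}\bigl(S^{n+2}S^dS^--S^{n+1}S^d\bigr)T$ and $S^{GD-}S-S^{GD-}T$, together with the direct verification $SS^d=SS^{GD-}SS^d=TS^{GD-}SS^d=TS^d$. You instead isolate the intrinsic identity $S^{GD-}-S^{d}=(S^{d})^{n}\bigl(S^{n+1}S^{d}S^{-}-S^{n}S^{d}\bigr)$, which is valid (both $(S^d)^nS^{n+1}S^dS^-=S^dSS^-$ and $(S^d)^nS^nS^d=S^d$ follow from $(S^dS)^n=S^dS$), and conclude $S^{GD-}=S^{d}$ from the hypotheses alone; the equivalence of the two pre-orders is then a tautology. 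Your route is shorter, avoids the $T$-dependent manipulations entirely, and is honest about the non-orthogonality of the decomposition $\N(P)\oplus\R(P)$. It also buys a genuinely stronger conclusion that the paper does not record: under $\|S^d\|\le 1$ and the stated limit condition, the GD1 inverse of $S$ degenerates to $S^d$ (equivalently $Y=0$ in Theorem \ref{thm1}), so on the class $\mathcal{PO}$ the relation $\leq^{GD-}$ is not merely equivalent to, but coincides with, the generalized Drazin pre-order. The only point to make explicit in a final write-up is the index shift: Theorem \ref{g1ep} gives $\lim_{n\to\infty}(S^{n+2}S^dS^--S^{n+1}S^d)=0$, which after replacing $n$ by $n-1$ is the form you use.
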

\begin{proof}

Let $S \leq^{GD-}T$. From $SS^{GD-}=TS^{GD-}$, we obtain $SS^dSS^-=TS^dSS^-$ and consequently,
\begin{center}
  $SS^d=SS^dSS^-SS^d=TS^dSS^-SS^d=TS^d$.  
\end{center}
 Applying $S^dS=S^{GD-}S=S^{GD-}T=S^dSS^-T$, we get
\begin{eqnarray*}
 S^{d}S-S^{d}T &=&S^dSS^-T-S^dT=\left((S^d)^{n+2}S^{n+2}S^--(S^d)^{n+2}S^{n+1}\right)T\\
 &=& (S^d)^{n+1}\left(S^{n+2}S^dS^--S^{n+1}S^d\right)T.
\end{eqnarray*}
Using Theorem \ref{g1ep}, we have 
$\| S^{d}S-S^{d}T\| \leq \|(S^d)^{m+1}\| \|S^{n+2}S^dS^--S^{n+1}S^d\| \|T\|\to 0\mbox{ as $m \to \infty$}$. Hence $S \leq^ {d} T$.\\
Conversely, let $S \leq ^{d} T$. Then $S^{d}T=S^{d}S=TS^{d}$. Further,
\begin{center}
$SS^{GD-}=SS^dSS^{-}=TS^dSS^{-}=TS^{GD-}$, and
\end{center}
\begin{eqnarray*}
 S^{GD-}S-S^{GD-}T&=&SS^d-S^dSS^-T=S^dT-S^dSS^-T=\left((S^d)^{n+2}S^{n+1}-(S^d)^{n+2}S^{n+2}S^-\right)T\\
 &=& (S^d)^{n+1} \left(S^{n+1}S^d-S^{n+2}S^dS^-\right)T.
\end{eqnarray*}
Again applying Theorem \ref{g1ep}, we have 
\begin{center}
 $\|S^{GD-}S-S^{GD-}T\| \leq \|(S^d)^{n+1}\| \|S^{n+1}S^d-S^{n+2}S^dS^-\| \|T\|\to 0 \mbox{ as $m \to \infty$}$.    
\end{center}
Hence, $S \leq^{GD-} T$. 
\end{proof}
Notice that, in the proof of Theorem \ref{pod}, we have not used both limiting conditions from the Theorem \ref{g1ep}. Hence Theorem \ref{pod} is still true if we replace the limiting condition $\displaystyle\lim_{n\to \infty}(S^-S^{n+1}S^d-S^{n+1}S^dS^-)=0$ by $\displaystyle\lim_{n\to \infty}(S^{n+2}S^dS^--S^{n+1}S^d)=0$, which is restated in the below result.
\begin{corollary}
Let $S\in \B(\Hc)$ be generalized Drazin invertible which has closed range. Consider $\|S^d\|\leq 1$ and $\displaystyle\lim_{n\to \infty}(S^{n+2}S^dS^--S^{n+1}S^d)=0$. Then $S \leq^{GD-} T$ if and only if $S \leq^{d} T$.
\end{corollary}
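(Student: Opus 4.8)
The plan is to exploit the observation made in the remark preceding the corollary: this statement stands to Theorem~\ref{pod} exactly as advertised, the two being identical except that the hypothesis $\lim_{n\to\infty}(S^-S^{n+1}S^d - S^{n+1}S^dS^-)=0$ has been replaced by $\lim_{n\to\infty}(S^{n+2}S^dS^- - S^{n+1}S^d)=0$. In the proof of Theorem~\ref{pod}, the stronger hypothesis entered \emph{only} through Theorem~\ref{g1ep}, and solely to produce the estimate $\|S^{n+2}S^dS^- - S^{n+1}S^d\|\to 0$. Since that estimate is now a direct hypothesis, I would re-run the argument of Theorem~\ref{pod} verbatim, deleting the single appeal to Theorem~\ref{g1ep} and substituting the assumed limit in its place. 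The entire task thus reduces to verifying that no \emph{other} consequence of the old hypothesis (in particular the companion limit $\lim_{n\to\infty}(S^-S^{n+2}S^d - S^{n+1}S^d)=0$) is secretly used anywhere.

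For the forward implication I would start from $S\leq^{GD-}T$, i.e.\ $SS^{GD-}=TS^{GD-}$ and $S^{GD-}S=S^{GD-}T$. The first equation is dispatched by Proposition~\ref{pp-5}(i), yielding $SS^d=TS^d$ with no limiting condition. For the second, using $S^{GD-}=S^dSS^-$ together with $SS^-S=S$, I would rewrite $S^{GD-}S=S^dS$ and $S^{GD-}T=S^dSS^-T$, turning $S^{GD-}S=S^{GD-}T$ into $S^dS=S^dSS^-T$. The crux is the telescoping identity
\begin{equation*}
S^dS - S^dT = (S^d)^{n+1}\bigl(S^{n+2}S^dS^- - S^{n+1}S^d\bigr)T,
\end{equation*}
which rests only on the commutativity of $S$ and $S^d$ and on the idempotency of $S^dS$, so that $(S^d)^{n+2}S^{n+2}=S^dS$ and $(S^d)^{n+2}S^{n+1}=S^d$. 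Taking norms, bounding $\|(S^d)^{n+1}\|\le\|S^d\|^{n+1}\le 1$ via the hypothesis $\|S^d\|\le 1$, and letting $n\to\infty$ in the assumed limit forces $S^dS=S^dT$; together with $SS^d=TS^d$ this is exactly $S\leq^{d}T$.

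The converse is the mirror image: assuming $S\leq^{d}T$ (so $S^dT=S^dS=TS^d$), the equation $SS^{GD-}=TS^{GD-}$ follows at once from $SS^d=TS^d$, while for the remaining equation I would invoke the conjugate identity
\begin{equation*}
S^{GD-}S - S^{GD-}T = (S^d)^{n+1}\bigl(S^{n+1}S^d - S^{n+2}S^dS^-\bigr)T,
\end{equation*}
whose norm again tends to $0$ by the same two bounds. The one point that requires care, and which I regard as the only genuine obstacle, is the bookkeeping that converts the $S^{GD-}$-expressions into pure $S^d$-expressions through $SS^-S=S$ and the reductions $(S^d)^{n+2}S^{n+2}=S^dS$, $(S^d)^{n+2}S^{n+1}=S^d$: this is precisely where one must confirm that only the single assumed limit, and never its companion, is called upon. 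Since both directions close using nothing beyond $\|S^d\|\le 1$ and $\lim_{n\to\infty}(S^{n+2}S^dS^- - S^{n+1}S^d)=0$, the corollary follows.
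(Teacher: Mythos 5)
Your proposal is correct and follows exactly the route the paper intends: the paper's own justification is precisely the remark that the proof of Theorem~\ref{pod} invokes Theorem~\ref{g1ep} only to obtain $\|S^{n+2}S^dS^--S^{n+1}S^d\|\to 0$, so assuming that limit directly lets the same argument go through verbatim. Your careful check that the companion limit and the other direction of Theorem~\ref{g1ep} are never used, and that both telescoping identities close under $\|S^d\|\le 1$ alone, is exactly the verification the paper leaves implicit.
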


\begin{corollary}
Let $S,T\in \mathcal{PO}$, where $\mathcal{ PO} = \left\{S\in \B(\Hc):  \|S^d\| \leq 1, \displaystyle\lim_{n \to \infty} \|  S^{n+2}S^dS^--S^{n+1}S^d  \| =0\right\}$. 
Then the relation 
$\leq ^ {GD-}$ is a pre-order on $\mathcal{ PO}$.
\end{corollary}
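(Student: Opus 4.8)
The plan is to verify the two defining properties of a pre-order, namely reflexivity and transitivity, on $\mathcal{PO}$. Reflexivity is immediate and requires none of the machinery: for any $S \in \mathcal{PO}$, putting $T = S$ in Definition \ref{gd1rel} makes both required identities $SS^{GD-} = TS^{GD-}$ and $S^{GD-}S = S^{GD-}T$ into tautologies, so $S \leq^{GD-} S$. Thus the whole burden falls on transitivity.

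For transitivity, the key observation is that on $\mathcal{PO}$ the relation $\leq^{GD-}$ collapses onto the generalized Drazin relation $\leq^{d}$. Indeed, the two defining conditions of $\mathcal{PO}$ are precisely the standing hypotheses $\|A^d\|\le 1$ and $\lim_{n\to\infty}(A^{n+2}A^dA^{-} - A^{n+1}A^d)=0$ of the Corollary immediately preceding this one, so that for every $A \in \mathcal{PO}$ and every generalized Drazin invertible $B$ of closed range one has $A \leq^{GD-} B \iff A \leq^{d} B$. Hence, given $S,T,U \in \mathcal{PO}$ with $S \leq^{GD-} T$ and $T \leq^{GD-} U$, I would apply this equivalence to the pairs $(S,T)$ and $(T,U)$ (legitimate since $S,T \in \mathcal{PO}$) to obtain $S \leq^{d} T$ and $T \leq^{d} U$; and once transitivity of $\leq^{d}$ supplies $S \leq^{d} U$, a final application of the equivalence to $(S,U)$ returns $S \leq^{GD-} U$. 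The problem is thereby reduced to the transitivity of $\leq^{d}$.

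Establishing transitivity of $\leq^{d}$ is the step I expect to be the main obstacle. I would first rewrite $A \leq^{d} B$ in the symmetric form $A^d B = B A^d = A^d A = A A^d =: E_A$, where $E_A$ is the spectral idempotent $AA^d=A^dA$. Passing to the core-quasinilpotent decomposition \eqref{cqnd} of $S$ relative to $\Hc = \R(E_S)\oplus\N(E_S)$, the relation $S \leq^{d} T$ forces $T$ to be block-diagonal with the same invertible core block as $S$, that is $T = S_1 \oplus T_2$, whence $E_T = I \oplus E_{T_2}$ on this same decomposition. The genuine difficulty is that the decomposition attached to $T$ need not coincide with that of $S$, so the hypothesis $T \leq^{d} U$ is initially phrased relative to $\R(E_T)\oplus\N(E_T)$; the crux is therefore to show the two spectral idempotents are compatible, $E_S E_T = E_T E_S = E_S$ (equivalently $\R(E_S)\subseteq\R(E_T)$), which the block form above makes transparent. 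With this compatibility in hand, substituting the chained block descriptions yields $S^d U = S^d S$ and $U S^d = S S^d$, i.e. $S \leq^{d} U$. Alternatively, since $\leq^{d}$ is the standard generalized Drazin pre-order, one may simply cite its known transitivity. Either route, combined with the reduction above and the trivial reflexivity, shows that $\leq^{GD-}$ is a pre-order on $\mathcal{PO}$.
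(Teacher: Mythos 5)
Your proposal is correct and follows the route the paper intends: reflexivity is immediate from Definition \ref{gd1rel}, and transitivity is obtained by using the preceding corollary (whose hypotheses are exactly the defining conditions of $\mathcal{PO}$) to replace $\leq^{GD-}$ by $\leq^{d}$ on each link of the chain and then invoking transitivity of the generalized Drazin pre-order. The paper leaves that last step uncited; your block-diagonal argument via the core-quasinilpotent decomposition, showing the spectral idempotents satisfy $E_SE_T=E_TE_S=E_S$, correctly fills it in.
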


\section{1GD inverse}

In this section, we introduce 1GD inverse (called the dual of GD1) for bounded  Hilbert space operators. In addition, we state a few results in which the proofs are similar to those for GD1 inverses.
\begin{proposition}\label{prp-6}
Let $T\in \B(\Hc)$ be generalized Drazin invertible which has closed range.  Then $X=T^-TT^d$ is the unique solution of the following conditions:
\begin{equation*}
    TX=P_{\R(T^dT),\N(T^dT)} \mbox{ and } \R(X)\subset \R(T^-T).
\end{equation*}
\end{proposition}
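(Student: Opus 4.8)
The plan is to mirror the proof of Proposition~\ref{prop-1}, exploiting the duality between the two constructions. For existence, I would first simplify $TX$ directly: setting $X = T^-TT^d$ and using the inner-inverse identity $TT^-T = T$, one obtains $TX = (TT^-T)T^d = TT^d$. Since $T^d$ commutes with $T$, the operator $TT^d = T^dT$ is idempotent (because $(TT^d)^2 = T(T^dTT^d) = TT^d$ via $T^dTT^d = T^d$), so it is precisely the projector $P_{\R(T^dT),\N(T^dT)}$, which establishes the first defining condition. The range condition is immediate: every vector $T^-TT^d h$ equals $T^-T(T^d h)$, so $\R(X) = \R(T^-TT^d) \subset \R(T^-T)$.

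For uniqueness, I would follow the line of argument used for the GD1 inverse. Suppose $X_1$ and $X_2$ both satisfy the two conditions. Then $TX_1 = TX_2 = TT^d$, hence $T(X_1 - X_2) = \mathbf{0}$ and therefore $\R(X_1 - X_2) \subset \N(T)$. Because each $X_i$ satisfies the range constraint, we also have $\R(X_1 - X_2) \subset \R(T^-T)$. The key structural fact to invoke is that $T^-T$ is idempotent, with $\N(T^-T) = \N(T)$: the inclusion $\N(T) \subset \N(T^-T)$ is clear, and conversely $T^-Th = 0$ forces $Th = TT^-Th = 0$. Consequently $\Hc = \R(T^-T) \oplus \N(T)$, so $\R(X_1 - X_2) \subset \R(T^-T) \cap \N(T) = \{0\}$, giving $X_1 = X_2$.

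The computations here are routine; the only step that requires care is the uniqueness argument, where one must identify $\R(T^-T) \cap \N(T) = \{0\}$. This is the exact dual of the identity $\R(TT^d) \cap \N(TT^d) = \{0\}$ used in Proposition~\ref{prop-1}, and it relies on recognizing that $T^-T$ is an idempotent whose null space coincides with $\N(T)$. Once this splitting of $\Hc$ is in hand, both existence and uniqueness follow with essentially the same effort as in the GD1 case.
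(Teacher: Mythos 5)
Your proof is correct and is exactly the argument the paper intends: the paper states Proposition~\ref{prp-6} without proof, noting only that the proofs in this section mirror those for the GD1 case, and your argument is the faithful dual of Proposition~\ref{prop-1} (with the computation $TX = TT^-TT^d = TT^d$ and the uniqueness step resting on $\R(T^-T)\cap\N(T)=\{0\}$, which you correctly justify via the idempotency of $T^-T$ and $\N(T^-T)=\N(T)$). No gaps.
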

In view of the Proposition \ref{prp-6}, we define the following representation of 1GD inverse for Hilbert space operators.
\begin{definition}
Let $T\in \B(\Hc)$ be generalized Drazin invertible which has closed range and $T^{-}$ be a fixed inner inverse of $T$. An operator $X$ is called the 1GD inverse of $T$ if it satisfies
\begin{equation*}
    TX=P_{\R(T^dT),\N(T^dT)} \mbox{ and } \R(X)\subset \R(T^-T).
\end{equation*}
\end{definition}
The 1GD inverse of $T$ is denoted by $T^{-GD}$ and it is explicitly represented by $T^{GD-}=T^{-}TT^d$.
\begin{theorem}
Consider $T\in \B(\Hc)$  and $T^{-}$ as defined in the Theorem \ref{thm1}. The GD1 inverse of $T$ is given by 
\begin{equation*}
    T^{-GD}=\begin{bmatrix} {T_1}^{-1} ~&~ 0 \\ Z ~&~ 0 \end{bmatrix}=T^d+\begin{bmatrix} 0 ~&~ 0 \\ Z ~&~ 0 \end{bmatrix}~:~\begin{bmatrix} \N(P) \\ \R(P) \end{bmatrix} \to \begin{bmatrix} \N(P) \\ \R(P) \end{bmatrix}.
    \end{equation*} 
Consequently, $T^{-GD}=T^d$ if and only if $Z=0$.
\end{theorem}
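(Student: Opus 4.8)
The plan is to mirror the proof of Theorem~\ref{thm1} verbatim, replacing the GD1 representation $T^{GD-}=T^{d}TT^{-}$ by the defining representation $T^{-GD}=T^{-}TT^d$ of the 1GD inverse. First I would import the block forms already fixed in Theorem~\ref{thm1}: with respect to the decomposition $\Hc=\N(P)\oplus\R(P)$ one has
$$
T=\begin{bmatrix} T_1 & 0 \\ 0 & T_2 \end{bmatrix},\qquad
T^d=\begin{bmatrix} T_1^{-1} & 0 \\ 0 & 0 \end{bmatrix},\qquad
T^{-}=\begin{bmatrix} T_1^{-1} & Y \\ Z & T_2^{-} \end{bmatrix},
$$
where $T_1$ is invertible, $T_2$ is quasinilpotent, and $Y,Z,T_2^{-}$ obey the constraints $\R(T_2)\subset\N(Y)$, $\R(Z)\subset\N(T_2)$, $T_2^{-}\in T_2\{1\}$ recorded in \eqref{1i}.

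The key observation is that $TT^d=\begin{bmatrix} I & 0 \\ 0 & 0\end{bmatrix}$ is exactly the idempotent $I-P$, the coordinate projection onto $\N(P)$ along $\R(P)$ (recall $P=I-TT^d$). Hence right-multiplication by $TT^d$ annihilates the entire second block column, so the off-diagonal entry $Y$ and the block $T_2^{-}$ simply disappear. I would then compute directly
$$
T^{-GD}=T^{-}\bigl(TT^d\bigr)=\begin{bmatrix} T_1^{-1} & Y \\ Z & T_2^{-}\end{bmatrix}\begin{bmatrix} I & 0 \\ 0 & 0\end{bmatrix}=\begin{bmatrix} T_1^{-1} & 0 \\ Z & 0\end{bmatrix},
$$
which is the asserted matrix, and splitting off the diagonal part yields $T^{-GD}=T^d+\begin{bmatrix} 0 & 0 \\ Z & 0\end{bmatrix}$.

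The final equivalence is then immediate: the two displayed matrices for $T^{-GD}$ and $T^d$ coincide on every block except the lower-left one, so they are equal if and only if $Z=0$. There is essentially no obstacle here beyond the bookkeeping of block multiplication; the one point worth flagging is that the argument genuinely uses the \emph{specific} block form of $T^d$ (zero on the $\R(P)$ summand), so that $TT^d$ collapses to the coordinate projection $I-P$ rather than an arbitrary idempotent. As a consistency check, this is precisely dual to the GD1 computation of Theorem~\ref{thm1}, where left-multiplication by $T^dT=I-P$ instead kills the second \emph{row} and leaves $Y$ surviving while $Z$ is suppressed.
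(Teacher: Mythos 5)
Your computation is correct and is exactly the argument the paper intends: the theorem is stated without proof as the dual of Theorem~\ref{thm1}, whose justification is the same block-matrix calculation in the core-quasinilpotent decomposition, with $T^{-}TT^d$ in place of $T^dTT^{-}$ so that right-multiplication by $TT^d=I-P$ kills the second block column and leaves $Z$ rather than $Y$. No gaps; the observation that $TT^d$ is the coordinate projection onto $\N(P)$ is the only ingredient needed.
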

\begin{theorem}
Consider $T\in \B(\Hc)$  and $T^{-}$ as defined in the Theorem \ref{thm-2}. The 1GD inverse of $T$ is given by 
$$T^{-GD}=\begin{bmatrix} Z_1A_1A_1^d~ &~Z_1A_1^dA_2 \\ A_1^d~&~Z_3A_1^dA_2 \end{bmatrix}=Z_1A_1\begin{bmatrix} A_1^d~ &~(A_1^d)^2A_2 \\ 0~&~0 \end{bmatrix}+Z_3\begin{bmatrix} 0~ &~0 \\ A_1A_1^d~&~A_1^dA_2 \end{bmatrix}:~\begin{bmatrix} \R(T) \\ \N(T^*) \end{bmatrix} \to \begin{bmatrix} \R(T) \\ \N(T^*) \end{bmatrix}.$$
Consequently, $T^{-GD}=T^d$ if $Z_3=0$ and $A_1Z_1=Z_1A_1$.
\end{theorem}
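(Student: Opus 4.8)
The plan is to read off the explicit representation $T^{-GD}=T^{-}TT^{d}$ furnished by Proposition \ref{prp-6} together with the accompanying definition, and then to evaluate this product directly in the closed range decomposition \eqref{crd}. First I would substitute the block forms of $T$ and $T^{d}$ recorded after \eqref{crd} and the block form of $T^{-}=\begin{bmatrix} Z_{1} & Z_{2} \\ Z_{3} & Z_{4} \end{bmatrix}$ from Theorem \ref{thm-2}, and carry out the two block multiplications in the convenient order $T^{-}(TT^{d})$ rather than $(T^{-}T)T^{d}$.

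The one algebraic fact that makes the computation collapse is the identity $A_{1}(A_{1}^{d})^{2}=A_{1}^{d}$, which follows immediately from the commutativity $A_{1}A_{1}^{d}=A_{1}^{d}A_{1}$ and the outer-inverse law $A_{1}^{d}A_{1}A_{1}^{d}=A_{1}^{d}$ of the generalized Drazin inverse of $A_{1}$. Using it one gets $TT^{d}=\begin{bmatrix} A_{1}A_{1}^{d} & A_{1}^{d}A_{2} \\ 0 & 0 \end{bmatrix}$, and then left multiplication by $T^{-}$ produces a matrix whose first row is $Z_{1}$ applied to the first row of $TT^{d}$ and whose second row is $Z_{3}$ applied to that same row. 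Grouping the $Z_{1}$ and $Z_{3}$ contributions separately gives the two displayed summands: the first row equals $Z_{1}A_{1}$ times the first row of $T^{d}$ (here $A_{1}(A_{1}^{d})^{2}=A_{1}^{d}$ is used once more to simplify $Z_{1}A_{1}(A_{1}^{d})^{2}A_{2}$ to $Z_{1}A_{1}^{d}A_{2}$), while the second row is $Z_{3}$ times the first row of $TT^{d}$, i.e.\ $\begin{bmatrix} Z_{3}A_{1}A_{1}^{d} & Z_{3}A_{1}^{d}A_{2} \end{bmatrix}$. This yields exactly the stated two-term decomposition of $T^{-GD}$.

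For the closing ``consequently'' claim I would argue as follows. Setting $Z_{3}=0$ annihilates the entire second row of the computed matrix, while the inner-inverse constraint $A_{1}Z_{1}+A_{2}Z_{3}=I$ of Theorem \ref{thm-2} degenerates to $A_{1}Z_{1}=I$ on $\R(T)$. Combining $A_{1}Z_{1}=I$ with the hypothesis $A_{1}Z_{1}=Z_{1}A_{1}$ gives $Z_{1}A_{1}=I$ as well, so $A_{1}$ is invertible on $\R(T)$ with $A_{1}^{-1}=Z_{1}$, whence $A_{1}^{d}=A_{1}^{-1}=Z_{1}$. Substituting this back, the $(1,1)$ entry becomes $Z_{1}A_{1}A_{1}^{d}=A_{1}^{d}$ and the $(1,2)$ entry becomes $Z_{1}A_{1}^{d}A_{2}=(A_{1}^{d})^{2}A_{2}$, so the whole matrix collapses to the block form of $T^{d}$ recorded after \eqref{crd}.

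I do not anticipate a genuine obstacle: the argument is a short block computation resting entirely on the identity $A_{1}(A_{1}^{d})^{2}=A_{1}^{d}$. The only points needing a moment's care are, first, keeping track that the $(2,1)$ entry of $T^{-GD}$ is $Z_{3}A_{1}A_{1}^{d}$, which is precisely what vanishes once $Z_{3}=0$; and second, in the consequence, upgrading the one-sided relation $A_{1}Z_{1}=I$ to genuine two-sided invertibility of $A_{1}$ by invoking the commutativity hypothesis before asserting $A_{1}^{d}=A_{1}^{-1}$.
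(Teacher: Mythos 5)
Your computation is correct and is exactly the route the paper intends: the theorem is stated there without proof, as a direct block evaluation of $T^{-}TT^{d}$ in the decomposition \eqref{crd} parallel to Theorem \ref{thm-2}, resting on the same identity $A_{1}(A_{1}^{d})^{2}=A_{1}^{d}$ and the degeneration of $A_1Z_1+A_2Z_3=I$ to $A_1Z_1=I$ when $Z_3=0$. Note that your $(2,1)$ entry $Z_{3}A_{1}A_{1}^{d}$ is the right one; the first displayed matrix in the statement, whose $(2,1)$ entry reads $A_{1}^{d}$, contains a typo, as one sees by comparing it with the two-term sum that follows it.
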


\begin{theorem}
Let $T\in \B(\Hc)$ be generalized Drazin invertible which has closed range. Then 
\begin{enumerate}[(i)]
    \item $TT^{-GD}=T^{-GD}T$ if and only if $T^{-GD}=T^d$.
    \item $TT^{-GD}=T^{-GD}T$ if and only if $T^{-GD}=T^{GD-}=T^d$.
\end{enumerate}
 \end{theorem}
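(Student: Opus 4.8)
The plan is to reduce everything to the core-quasinilpotent decomposition of Theorem \ref{thm1}, in which both one-sided inverses take transparent block forms. Writing $T=\begin{bmatrix} T_1 & 0 \\ 0 & T_2\end{bmatrix}$ with $T_1$ invertible and $T_2$ quasinilpotent, we have $T^d=\begin{bmatrix} T_1^{-1} & 0 \\ 0 & 0\end{bmatrix}$, $T^{GD-}=\begin{bmatrix} T_1^{-1} & Y \\ 0 & 0\end{bmatrix}$ and $T^{-GD}=\begin{bmatrix} T_1^{-1} & 0 \\ Z & 0\end{bmatrix}$, where $Y$ and $Z$ are the off-diagonal blocks of the fixed inner inverse, subject to $YT_2=0$ and $T_2Z=0$. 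In this language $T^{-GD}=T^d$ is equivalent to $Z=0$, and $T^{GD-}=T^d$ is equivalent to $Y=0$.

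For part (i) I would compute $TT^{-GD}=\begin{bmatrix} I & 0 \\ T_2Z & 0\end{bmatrix}$ and $T^{-GD}T=\begin{bmatrix} I & 0 \\ ZT_1 & 0\end{bmatrix}$; since $T_2Z=0$ the first collapses to $\begin{bmatrix} I & 0 \\ 0 & 0\end{bmatrix}$, so commutativity is equivalent to $ZT_1=0$, hence to $Z=0$ by invertibility of $T_1$, i.e.\ to $T^{-GD}=T^d$. The same fact admits a coordinate-free proof mirroring the remark preceding Theorem \ref{thm3}: from $T^{-GD}TT^d=T^{-GD}$ and $TT^-T=T$ one gets, under $TT^{-GD}=T^{-GD}T$, that $T^{-GD}=T^{-GD}TT^d=TT^{-GD}T^d=TT^-T(T^d)^2=T(T^d)^2=T^d$, where the last step uses $T^dTT^d=T^d$ together with $TT^d=T^dT$. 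The converse is immediate because $T^d$ commutes with $T$.

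For part (ii) the implication $(T^{-GD}=T^{GD-}=T^d)\Rightarrow TT^{-GD}=T^{-GD}T$ is trivial, since $T^{-GD}=T^d$ commutes with $T$. In the forward direction, part (i) already gives $T^{-GD}=T^d$, i.e.\ $Z=0$, so the whole content reduces to also forcing $T^{GD-}=T^d$, i.e.\ $Y=0$. This is where I expect the main obstacle: the hypothesis $TT^{-GD}=T^{-GD}T$ constrains only the lower block $Z$ and is entirely blind to $Y$, so on its own it cannot pin down $T^{GD-}$ (an inner inverse with $Z=0$ but $Y\neq 0$ still satisfies $TT^{-GD}=T^{-GD}T$ while $T^{GD-}\neq T^d$). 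To close the argument I would look for a relation that couples the two one-sided inverses; the natural candidate is the mixed identity $TT^{GD-}=T^{-GD}T$, whose block form $\begin{bmatrix} I & T_1Y \\ 0 & 0\end{bmatrix}=\begin{bmatrix} I & 0 \\ ZT_1 & 0\end{bmatrix}$ forces $T_1Y=0$ and $ZT_1=0$, hence $Y=Z=0$ and $T^{-GD}=T^{GD-}=T^d$, with an immediate converse. I would therefore base (ii) on this coupled commutation rather than on the plain one used in (i), and this reconciliation of the $Y$-block with the stated conclusion is the step I would scrutinize most.
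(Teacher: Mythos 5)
Your part (i) is correct, and it is essentially the paper's own argument: the paper states this theorem without proof (the section only says the proofs are ``similar to those for GD1 inverses''), and the relevant GD1 analogue is Theorem~\ref{thm3} together with the remark preceding it, whose chain $T^{GD-}=T^dTT^{GD-}=T^dT^{GD-}T=T^d$ you have dualized correctly; your block computation in the decomposition of Theorem~\ref{thm1} is a sound alternative route to the same fact.

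For part (ii) you have put your finger on a defect in the \emph{statement} rather than a gap in your own reasoning. As printed, (ii) asserts that the same condition $TT^{-GD}=T^{-GD}T$, which by (i) is equivalent to $Z=0$, is also equivalent to $Y=Z=0$. Your observation that an inner inverse with $Z=0$ but $Y\neq 0$ (for instance when $T_2=0$ on a nontrivial summand $\R(P)$, so that any $Y$ with $\R(T_2)\subset\N(Y)$ is admissible) satisfies the commutation while $T^{GD-}\neq T^d$ is a genuine counterexample to the forward implication, so the theorem as literally stated is not provable. Your repair --- replacing the hypothesis by the mixed identity $TT^{GD-}=T^{-GD}T$, which in block form forces $T_1Y=0$ and $ZT_1=0$, hence $Y=Z=0$ --- is mathematically correct, but you should say explicitly that you are then proving a corrected statement, not the one printed. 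Since the paper supplies no proof of this result, there is nothing in the text to reconcile your argument with; the most plausible reading is that the displayed condition in (ii) is a typo, and either your mixed identity or the conjunction of $TT^{GD-}=T^{GD-}T$ with $TT^{-GD}=T^{-GD}T$ would make the equivalence true.
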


\begin{theorem}
Let $T\in \B(\Hc)$ be generalized Drazin invertible which has closed range. Then $X=T^-TT^{d}$ is the unique solution of operator equations: 
\begin{equation*}
XTX=X, ~~ XT^d =T^-T^d,~~\text{and}~~TX=TT^d.
\end{equation*}
\end{theorem}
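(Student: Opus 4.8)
This statement is the exact left/right dual of Theorem \ref{thm4}, so the plan is to mirror that proof: first verify that the explicit candidate $X=T^-TT^d$ (which is precisely the $1$GD inverse $T^{-GD}$) satisfies the three equations, and then establish uniqueness by a single telescoping chain of equalities. The only ingredients I would use are the inner-inverse relation $TT^-T=T$, the commutativity $TT^d=T^dT$, and the outer relation $T^dTT^d=T^d$ built into the definition of the generalized Drazin inverse.

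For existence I would check the three equations separately. The third equation is immediate, since $TX=T(T^-TT^d)=(TT^-T)T^d=TT^d$. The second follows from $XT^d=T^-T(T^d)^2$ together with $T(T^d)^2=T^dTT^d=T^d$, which gives $XT^d=T^-T^d$. For the outer relation $XTX=X$ I would group as $XTX=T^-(TT^dTT^-T)T^d$; the bracket collapses to $TT^dT$ once $TT^-T=T$ is applied to its last three factors, after which $T^-(TT^dT)T^d=T^-T(T^dTT^d)=T^-TT^d=X$ by $T^dTT^d=T^d$. This bracket bookkeeping is the only place that requires care, since the cancellation closes up only if $TT^-T=T$ is applied in the correct position.

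For uniqueness, suppose $X_1$ and $X_2$ both satisfy the system. I would run the chain
\begin{align*}
X_1=X_1TX_1=X_1TT^d=X_1T^dT=T^-T^dT=X_2T^dT=X_2TT^d=X_2TX_2=X_2,
\end{align*}
in which the first and last steps use $X_iTX_i=X_i$, the second and second-to-last use $TX_i=TT^d$, the middle two steps use the equation $X_iT^d=T^-T^d$ for $i=1,2$, and the remaining steps are commutativity $TT^d=T^dT$. This simultaneously identifies the common value as $T^-T^dT=T^-TT^d$ and forces $X_1=X_2$.

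I anticipate no genuine difficulty beyond correctly transposing the roles of left and right multiplication relative to Theorem \ref{thm4}; the structural content is identical, and all cancellations are driven by the same three elementary identities, so the only real caution is in the placement of the inner-inverse cancellation inside the verification of $XTX=X$.
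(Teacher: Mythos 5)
Your proposal is correct and follows essentially the same route the paper intends: the result is stated as the left/right dual of Theorem \ref{thm4}, and your verification of the three equations for $X=T^-TT^d$ together with the telescoping uniqueness chain is exactly the dualized version of the paper's argument for that theorem (the paper omits the details, noting only that the proofs mirror the GD1 case). No gaps.
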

\begin{definition}
Let $T\in \B(\Hc)$ and $S\in \B(\Hc)$ be generalized Drazin invertible which has closed range. We will say that  $S$ is below $T$  under the relation  $\leq ^ {-GD}$ if $SS^{-GD}= TS^{-GD}$ and $S^{-GD}S = S^{-GD}T$. We denote such relation by $S \leq ^{-GD}T$.
\end{definition}
\begin{proposition}
Let $T\in \B(\Hc)$ and $S\in \B(\Hc)$ be generalized Drazin invertible which has closed range. Then
\begin{enumerate}
    \item [(i)] $S^{-GD}S= S^{-GD}T$  if and only if $S^dS = S^dT$.
    \item [(ii)] $SS^{-GD} = TS^{-GD}$ if and only if $S^d = TS^{-}S^d$.
\end{enumerate}
\end{proposition}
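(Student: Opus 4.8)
The plan is to treat this as the left--right dual of Proposition~\ref{pp-5}, exploiting the explicit form $S^{-GD}=S^-SS^d$ together with the basic identities $SS^-S=S$, $S^dSS^d=S^d$, and the fact that $E:=SS^d=S^dS$ is an idempotent commuting with $S$ (so that $SS^dS^d=S^d$ as well). The whole argument rests on three one-line ``collapse'' computations that I would establish first: $SS^{-GD}=SS^-SS^d=(SS^-S)S^d=SS^d=E$, which holds unconditionally; $S^dSS^{-GD}=S^d(SS^-S)S^d=S^dSS^d=S^d$; and $S^{-GD}S^d=S^-(SS^dS^d)=S^-S^d$. These let me slide $S^{-GD}$ past $S$ and past $S^d$ cleanly.

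For part (i), in the forward direction I would write $S^dS=(S^dS)(S^{-GD}S)$, which is legitimate because $S^dSS^{-GD}=S^d$ forces $(S^dSS^{-GD})S=S^dS$; replacing the factor $S^{-GD}S$ by the hypothesis $S^{-GD}T$ and collapsing $S^dSS^{-GD}$ to $S^d$ then yields $S^dS=S^dT$. Conversely, assuming $S^dS=S^dT$, I would compute directly $S^{-GD}S=S^-S(S^dS)=S^-S(S^dT)=S^{-GD}T$.

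For part (ii), I would first record the unconditional identity $SS^{-GD}=E=SS^d$, so that the relation $SS^{-GD}=TS^{-GD}$ is equivalent to $SS^d=TS^-SS^d$. Right-multiplying this equality by $S^d$ and using $SS^dS^d=S^d$ on both sides converts it into $S^d=TS^-S^d$, which is the forward implication; for the converse I would run the same chain backwards, using $TS^-S^d=S^d$ to obtain $TS^{-GD}=TS^-(S^dS)=(TS^-S^d)S=S^dS=E=SS^{-GD}$.

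The only real pitfall to guard against is the temptation to simplify $SS^dS$ to $S$: for a genuinely generalized (as opposed to group) Drazin inverse this is false, so every collapse must route through one of the legitimate identities $SS^-S=S$, $S^dSS^d=S^d$, or $SS^dS^d=S^d$. Keeping the idempotent $E=SS^d=S^dS$ in view, and never discarding the quasinilpotent part of $S$, is exactly what makes all four implications go through.
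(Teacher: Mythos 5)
Your proof is correct, and it is essentially the argument the paper intends: the paper omits the proof in Section 5, remarking only that it is the dual of the GD1 case, and your computations are precisely the dualization of the paper's proof of Proposition~\ref{pp-5}, using the same collapse identities $SS^-S=S$ and $S^dSS^d=S^d$ and correctly avoiding the illegitimate simplification $SS^dS=S$.
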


\begin{corollary}
Let $T\in \B(\Hc)$ and $S\in \B(\Hc)$ be generalized Drazin invertible which has closed range.  Then the following statements are equivalent:
\begin{enumerate}
\item [(i)] $S\leq ^ {-GD}T$.
    \item [(ii)] $SS^dS = TS^{-GD}S= SS^dT$.
    \item [(iii)] $SS^d =TS^{-GD}= S^dT$.
\end{enumerate}
\end{corollary}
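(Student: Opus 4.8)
The plan is to run the dual of the argument behind the corollary for $\leq^{GD-}$, using the Proposition that immediately precedes this statement. By the definition of $\leq^{-GD}$, assertion (i) asserts $SS^{-GD}=TS^{-GD}$ together with $S^{-GD}S=S^{-GD}T$; by that Proposition these two equalities are equivalent, respectively, to
\[
(\mathrm a)\quad S^{d}=TS^{-}S^{d},\qquad\qquad (\mathrm b)\quad S^{d}S=S^{d}T .
\]
Thus (i) is equivalent to the conjunction of (a) and (b), and the whole task reduces to matching this conjunction against (ii) and (iii). Throughout I would keep at hand the inner-inverse identity $SS^{-}S=S$, the commutativity $SS^{d}=S^{d}S$ of the generalized Drazin inverse, and the derived relation $S^{d}SS^{d}=S^{d}$, from which the further identities used below follow at once.

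First I would prove (i)$\Leftrightarrow$(iii). The key preliminary observation is that $S^{-GD}=S^{-}SS^{d}$ gives $SS^{-GD}=SS^{-}SS^{d}=SS^{d}$ \emph{unconditionally}, by $SS^{-}S=S$. Consequently the equality $SS^{-GD}=TS^{-GD}$ is literally $SS^{d}=TS^{-GD}$, so (a) is equivalent to the equality $SS^{d}=TS^{-GD}$ appearing in (iii). Likewise, commutativity rewrites $S^{d}S=S^{d}T$ as $SS^{d}=S^{d}T$, so (b) is equivalent to the equality $SS^{d}=S^{d}T$ appearing in (iii). Hence (a)$\wedge$(b) is precisely (iii), and (i)$\Leftrightarrow$(iii) follows.

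It remains to prove (ii)$\Leftrightarrow$(iii), which I would do by multiplying through by $S$ and then undoing this by inserting $S^{d}$. Right-multiplying $SS^{d}=TS^{-GD}$ by $S$ yields $SS^{d}S=TS^{-GD}S$, and left-multiplying $SS^{d}=S^{d}T$ by $S$ yields $SS^{d}S=SS^{d}T$; together these give (ii). For the converse, from $SS^{d}S=SS^{d}T$ I would left-multiply by $S^{d}$ and use $S^{d}SS^{d}=S^{d}$ to recover $S^{d}S=S^{d}T$, hence $SS^{d}=S^{d}T$; and from $SS^{d}S=TS^{-GD}S$ I would right-multiply by $S^{d}$ and use $SS^{d}SS^{d}=SS^{d}$ together with $S^{-GD}SS^{d}=S^{-GD}$ to recover $SS^{d}=TS^{-GD}$. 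I expect the only delicate point to be precisely these backward reductions: because $S$ need not be injective or surjective, one cannot cancel the extra factor of $S$ directly but must reintroduce $S^{d}$ on the correct side and collapse via $S^{d}SS^{d}=S^{d}$ and $SS^{d}SS^{d}=SS^{d}$; getting these collapses on the appropriate side, which is mirror-reversed relative to the $\mathrm{GD}1$ case, is where the dual nature of the $1\mathrm{GD}$ inverse has to be handled with care.
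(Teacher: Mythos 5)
Your proposal is correct: the identifications $SS^{-GD}=SS^{-}SS^{d}=SS^{d}$, the reduction of the two defining conditions of $\leq^{-GD}$ via the preceding Proposition, and the backward reductions using $S^{d}SS^{d}=S^{d}$ and $S^{-GD}SS^{d}=S^{-GD}$ all check out. This is essentially the argument the paper intends, since it states the corollary as a direct verification from the definition of $\leq^{-GD}$ and the proposition immediately before it.
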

\begin{theorem}
Let $T\in \B(\Hc)$ be generalized Drazin invertible which has closed range and consider the same decomposition as given in Theorem \ref{thm1}, for $T$ and $T^{-}$.  Then the following conditions are equivalent:
\begin{enumerate}[(i)]
\item $T \leq^{-GD}X$.
\item $X=\begin{bmatrix} T_1 ~&~ 0 \\ -X_4ZT_1 ~&~ X_4 \end{bmatrix}~:~\begin{bmatrix} \N(P) \\ \R(P) \end{bmatrix} \to \begin{bmatrix} \N(P) \\ \R(P) \end{bmatrix}$, where $X_4\in \B(\R(P))$.
\end{enumerate}
\end{theorem}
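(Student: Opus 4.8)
The plan is to reduce the two conditions defining $T\leq^{-GD}X$ to a system of block-operator equations and solve it, mirroring the computation carried out for the $\leq^{GD-}$ relation. First I would record the block form of the $1$GD inverse obtained in the preceding (dual) theorem, namely
$$T^{-GD}=\begin{bmatrix} T_1^{-1} & 0 \\ Z & 0 \end{bmatrix}:\begin{bmatrix}\N(P)\\ \R(P)\end{bmatrix}\to\begin{bmatrix}\N(P)\\ \R(P)\end{bmatrix},$$
together with the structural identity coming from \eqref{1i}: since $\R(Z)\subset\N(T_2)$ we have $T_2Z=0$. Writing a general operator as $X=\begin{bmatrix} X_1 & X_2 \\ X_3 & X_4 \end{bmatrix}$ with respect to $\Hc=\N(P)\oplus\R(P)$, I then compute the four products $TT^{-GD}$, $T^{-GD}T$, $XT^{-GD}$, $T^{-GD}X$ directly.

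Using $T_2Z=0$, the first two collapse to $TT^{-GD}=\begin{bmatrix} I & 0 \\ 0 & 0\end{bmatrix}$ and $T^{-GD}T=\begin{bmatrix} I & 0 \\ ZT_1 & 0\end{bmatrix}$. The operative conditions are $TT^{-GD}=XT^{-GD}$ and $T^{-GD}T=T^{-GD}X$ (note that, since $T$ is the \emph{smaller} element, only $T^{-GD}$ enters). Equating blocks in $T^{-GD}T=T^{-GD}X$ and using the invertibility of $T_1$ immediately forces $X_1=T_1$ and $X_2=0$; the lower row then holds automatically, since $ZX_1=ZT_1$ and $ZX_2=0$. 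Equating blocks in $TT^{-GD}=XT^{-GD}$ gives the single nontrivial relation $X_3T_1^{-1}+X_4Z=0$, the top row $X_1T_1^{-1}+X_2Z=I$ being automatic once $X_1=T_1$ and $X_2=0$; hence $X_3=-X_4ZT_1$ with $X_4\in\B(\R(P))$ free. This yields exactly the claimed form of $X$.

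For the converse I would substitute the displayed $X=\begin{bmatrix} T_1 & 0 \\ -X_4ZT_1 & X_4 \end{bmatrix}$ back into the products $XT^{-GD}$ and $T^{-GD}X$ and check, again via $T_2Z=0$, that they reproduce $TT^{-GD}$ and $T^{-GD}T$ respectively; this is a short verification. I expect no serious obstacle, since the argument is the exact dual of the earlier $\leq^{GD-}$ result. The only points demanding care are the bookkeeping of multiplication order (which fixes which condition constrains which block of $X$) and the consistent use of the constraint $T_2Z=0$ forced by $\R(Z)\subset\N(T_2)$; I would also note in passing that the characterization is understood for $X$ in the standing class of generalized Drazin invertible operators with closed range, as required for the relation to be defined.
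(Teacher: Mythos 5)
Your proposal is correct, and it follows exactly the route the paper intends: the paper states this theorem without proof (the results of Section~5 are declared to be duals of the GD1 ones), and the intended argument is precisely your block computation in the core--quasinilpotent decomposition, using $T^{-GD}=\begin{bmatrix} T_1^{-1} & 0 \\ Z & 0\end{bmatrix}$ and the constraint $T_2Z=0$ to reduce $TT^{-GD}=XT^{-GD}$ and $T^{-GD}T=T^{-GD}X$ to $X_1=T_1$, $X_2=0$, $X_3=-X_4ZT_1$ with $X_4$ free.
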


\begin{theorem}
Let $T\in \B(\Hc)$ be generalized Drazin invertible which has closed range. Then $$\lim_{n\to \infty}(T^{n+1}T^dT^--T^-T^{n+1}T^d)=0$$ if and only if $\displaystyle\lim_{n\to \infty}(T^{n+2}T^dT^--T^{n+1}T^d)=0$ and $\displaystyle\lim_{n\to \infty}(T^-T^{n+2}T^d-T^{n+1}T^d)=0$.
\end{theorem}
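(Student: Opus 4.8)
The plan is to exploit the near-perfect symmetry between this statement and Theorem~\ref{g1ep}. Writing $C_n = T^-T^{n+1}T^d - T^{n+1}T^dT^-$ for the commutator-type term appearing there, I observe that the quantity $T^{n+1}T^dT^- - T^-T^{n+1}T^d$ in the present statement is exactly $-C_n$, so $\|T^{n+1}T^dT^- - T^-T^{n+1}T^d\| = \|C_n\|$ for every $n$; moreover the two right-hand conditions $\lim_n(T^{n+2}T^dT^--T^{n+1}T^d)=0$ and $\lim_n(T^-T^{n+2}T^d-T^{n+1}T^d)=0$ are literally the same pair of conditions as in Theorem~\ref{g1ep}, merely listed in the opposite order. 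Hence the equivalence is an immediate consequence of Theorem~\ref{g1ep}, and in principle a one-line proof suffices: the left-hand limit coincides with the one governing Theorem~\ref{g1ep}, and the right-hand conjunction is unchanged.

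For completeness I would nonetheless record a self-contained argument that mirrors the proof of Theorem~\ref{g1ep}. Set $A_n = T^-T^{n+2}T^d - T^{n+1}T^d$ and $B_n = T^{n+2}T^dT^- - T^{n+1}T^d$. The forward direction rests on two factorizations. Using $T^dT = TT^d$ together with the inner-inverse identity $TT^-T = T$, I would check that $B_n = T\,(T^{n+1}T^dT^- - T^-T^{n+1}T^d)$, the key collapse being $TT^-T^{n+1}T^d = (TT^-T)T^nT^d = T^{n+1}T^d$; symmetrically $A_n = (T^-T^{n+1}T^d - T^{n+1}T^dT^-)\,T$, where now $T^{n+1}T^dT^-T = T^nT^d(TT^-T) = T^{n+1}T^d$. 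Bounding each factorization by $\|T\|$ times the commutator norm via submultiplicativity then gives $\|A_n\|,\ \|B_n\| \to 0$ whenever $\lim_n(T^{n+1}T^dT^- - T^-T^{n+1}T^d) = 0$.

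For the converse I would insert $\pm\,T^{n+1}T^d$ and apply the triangle inequality: at the shifted exponent, $T^{n+2}T^dT^- - T^-T^{n+2}T^d = B_n - A_n$, so $\|T^{n+2}T^dT^- - T^-T^{n+2}T^d\| \le \|B_n\| + \|A_n\| \to 0$, which is precisely the desired limit up to the harmless index shift $n \mapsto n+1$. The main (and essentially only) obstacle is bookkeeping: performing the insertions of $TT^-T = T$ so that the stray factors cancel cleanly, and tracking the one-step index shift in the converse. There is no analytic subtlety once these algebraic identities are in place. Given the exact match with Theorem~\ref{g1ep}, I would flag explicitly in the write-up that the result is the dual of Theorem~\ref{g1ep} and may simply be cited rather than reproved.
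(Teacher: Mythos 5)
Your proposal is correct and takes essentially the same route as the paper: the paper leaves this theorem unproved (stating only that the proofs in this section are similar to those for the GD1 inverse), and as you observe the statement is Theorem~\ref{g1ep} verbatim up to a sign on the left-hand term and a reordering of the two right-hand conditions, so citing it suffices. Your self-contained factorizations $B_n = T\,C_n$ and $A_n = -C_n\,T$, together with the $\pm T^{n+1}T^d$ insertion and index shift for the converse, reproduce exactly the computation in the proof of Theorem~\ref{g1ep}.
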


\begin{theorem}
Let $S\in \B(\Hc)$ be generalized Drazin invertible which has closed range. Consider $\|S^d\|\leq 1$ and $\displaystyle\lim_{n\to \infty}(S^-S^{n+2}S^d-S^{n+1}S^d)=0$. Then $S \leq^{-GD} T$ if and only if $S \leq^{d} T$.
\end{theorem}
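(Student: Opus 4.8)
The plan is to mirror the proof of Theorem \ref{pod} under the left--right duality that exchanges the GD1 and 1GD inverses: everywhere I replace the representation $S^{GD-}=S^dSS^-$ by $S^{-GD}=S^-SS^d$ and swap the roles played by the two equalities defining the order. Throughout I will use that $SS^d=S^dS$ is idempotent, together with the resulting identities $S^{n+2}(S^d)^{n+2}=SS^d$ and $S^{n+1}(S^d)^{n+2}=S^d$, and the hypothesis $\|S^d\|\le 1$, which yields $\|(S^d)^{n+1}\|\le 1$ uniformly in $n$. The point of the duality is that the equality in the order that reduces to a commutator condition (requiring the limit) and the one that is algebraically free are interchanged relative to Theorem \ref{pod}.

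For the forward implication, suppose $S\leq^{-GD}T$, that is $SS^{-GD}=TS^{-GD}$ and $S^{-GD}S=S^{-GD}T$. By the $1$GD analogue of Proposition \ref{pp-5} (the unlabeled proposition preceding this theorem), the second equality is equivalent to $S^dS=S^dT$, so one of the two equalities defining $\leq^d$ is immediate. For the other I first reduce $SS^{-GD}=TS^{-GD}$ to $SS^d=TS^-SS^d$, using $SS^{-GD}=SS^-SS^d=SS^d$ and $TS^{-GD}=TS^-SS^d$. Substituting $SS^d=S^{n+2}(S^d)^{n+2}$ and $S^d=S^{n+1}(S^d)^{n+2}$ and factoring $(S^d)^{n+1}$ to the right gives the telescoping identity
\begin{equation*}
SS^d-TS^d=T\bigl(S^-S^{n+2}S^d-S^{n+1}S^d\bigr)(S^d)^{n+1},
\end{equation*}
valid for every $n$. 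Taking norms, bounding $\|(S^d)^{n+1}\|\le 1$, and invoking $\lim_{n\to\infty}(S^-S^{n+2}S^d-S^{n+1}S^d)=0$ forces $SS^d=TS^d$. Since $SS^d=S^dS=S^dT$ already, I conclude $S^dT=S^dS=TS^d$, i.e. $S\leq^d T$.

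For the converse, assume $S\leq^d T$, so $S^dT=S^dS=TS^d$. The equality $S^{-GD}S=S^{-GD}T$ is immediate from $S^dS=S^dT$ (again by the $1$GD analogue of Proposition \ref{pp-5}, or directly by left-multiplying by $S^-S$). For $SS^{-GD}=TS^{-GD}$ I run the same reduction in reverse: $SS^{-GD}-TS^{-GD}=SS^d-TS^-SS^d=TS^d-TS^-SS^d$, and the same two substitutions produce
\begin{equation*}
SS^{-GD}-TS^{-GD}=T\bigl(S^{n+1}S^d-S^-S^{n+2}S^d\bigr)(S^d)^{n+1},
\end{equation*}
whose norm again tends to $0$, giving $SS^{-GD}=TS^{-GD}$ and hence $S\leq^{-GD}T$.

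The main obstacle is the algebraic bookkeeping that recasts the noncommuting product $SS^d-TS^-SS^d$ as a right multiple of the single difference $S^-S^{n+2}S^d-S^{n+1}S^d$: idempotency of $SS^d$ and the identities $S^{n+2}(S^d)^{n+2}=SS^d$, $S^{n+1}(S^d)^{n+2}=S^d$ must be applied in exactly the order above so that the residual factor is precisely $(S^d)^{n+1}$, whose norm is the only quantity controlled by $\|S^d\|\le 1$. As with the observation following Theorem \ref{pod}, only the single limiting condition assumed in the hypothesis is used, so the full equivalence of the dual limiting theorem is not needed.
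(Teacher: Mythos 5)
Your proof is correct and is essentially the paper's intended argument: the paper omits the proof of this theorem, stating only that it is "similar to those for GD1 inverses," and your argument is precisely the dualization of the proof of Theorem \ref{pod}, with the roles of the two defining equalities correctly swapped (the one needing the limit is now $SS^{-GD}=TS^{-GD}$) and the key factorization $S^-SS^d-S^d=(S^-S^{n+2}S^d-S^{n+1}S^d)(S^d)^{n+1}$ playing the role of the telescoping identity in Theorem \ref{pod}. The algebraic identities you invoke ($S^{n+2}(S^d)^{n+2}=SS^d$, $S^{n+1}(S^d)^{n+2}=S^d$, $\|(S^d)^{n+1}\|\le 1$) all check out, so no gap remains.
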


\begin{corollary}
Let $S,T\in \mathcal{PO}$, where $\mathcal{ PO} = \left\{S\in \B(\Hc):  \|S^d\| \leq 1, \displaystyle\lim_{n \to \infty} \|  S^-S^{n+2}S^d-S^{n+1}S^d  \| =0\right\}$. 
Then the relation 
$\leq ^ {-GD}$ is a pre-order on $\mathcal{ PO}$.
\end{corollary}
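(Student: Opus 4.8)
To show that $\leq^{-GD}$ is a pre-order on $\mathcal{PO}$, I must verify reflexivity and transitivity. Reflexivity is immediate: putting $T=S$ in the definition of $\leq^{-GD}$, the two required identities $SS^{-GD}=SS^{-GD}$ and $S^{-GD}S=S^{-GD}S$ hold trivially, so $S\leq^{-GD}S$ for every generalized Drazin invertible operator with closed range, in particular for every $S\in\mathcal{PO}$.

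The real content is transitivity, and the plan is to pass through the generalized Drazin pre-order $\leq^d$ by means of the theorem stated just above. Each $S\in\mathcal{PO}$ satisfies $\|S^d\|\le1$ together with $\lim_{n\to\infty}\|S^-S^{n+2}S^d-S^{n+1}S^d\|=0$, which are exactly the hypotheses of that theorem; hence for every generalized Drazin invertible $X$ with closed range we have the equivalence $S\leq^{-GD}X\iff S\leq^d X$. Now take $S,T,U\in\mathcal{PO}$ with $S\leq^{-GD}T$ and $T\leq^{-GD}U$. Applying the equivalence to $S$ and then to $T$ (both belong to $\mathcal{PO}$) gives $S\leq^d T$ and $T\leq^d U$. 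Assuming $\leq^d$ is transitive we obtain $S\leq^d U$, and a final application of the equivalence to $S\in\mathcal{PO}$ returns $S\leq^{-GD}U$, which is the desired conclusion.

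The one nontrivial ingredient, which I would isolate as a lemma (or cite from \cite{mosic18}), is that $\leq^d$ is itself transitive. The clean way to see this is to rewrite the relation as $S\leq^d T\iff pT=Tp=S^2S^d$, where $p=SS^d=S^dS$ is the spectral idempotent of $S$; in particular $p$ commutes with $T$ and therefore with $q=TT^d$. The key point is the identity $p=pq$: the idempotent $p(I-q)$ has for its range a closed $T$-invariant subspace $W$ on which $T$ coincides with the invertible core $S^2S^d$ restricted to $\R(p)$, so $T|_W$ is bounded below, while $W\subset\N(q)$ lies inside the quasinilpotent part of $T$, so $T|_W$ is quasinilpotent; since an operator on a nonzero space cannot be simultaneously bounded below and quasinilpotent, $W=\{0\}$ and hence $p=pq$. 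Granting $p=pq$, the relations $q(U-T)=0=(U-T)q$ coming from $T\leq^d U$ yield $p(U-T)=(p-pq)(U-T)=0$ and likewise $(U-T)p=0$, so $pU=Up=pT=S^2S^d$, i.e. $S\leq^d U$.

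I expect the invertible-versus-quasinilpotent dichotomy behind $p=pq$ to be the only genuinely delicate step; once transitivity of $\leq^d$ is in hand, the two passages through the preceding theorem are purely formal and reflexivity is trivial, so the corollary follows at once.
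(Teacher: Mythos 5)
Your proposal is correct and follows exactly the route the paper intends: the corollary is stated without proof as an immediate consequence of the preceding theorem (which, for each $S\in\mathcal{PO}$, converts $S\leq^{-GD}X$ into $S\leq^{d}X$), combined with reflexivity of $\leq^{-GD}$ and the fact that $\leq^{d}$ is a pre-order, which the paper would simply cite from the literature on the generalized Drazin (gDMP) pre-order. Your self-contained verification that $\leq^{d}$ is transitive --- via the spectral idempotents $p=SS^{d}$, $q=TT^{d}$ and the invertible-versus-quasinilpotent dichotomy forcing $p=pq$ --- is sound and goes beyond what the paper records, but it does not change the argument's structure.
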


\section{Conclusion}
Our research introduces two new classes of inverses: GD1 and 1GD inverses for Hilbert space operators employing specific definitions. We have investigated some properties of these inverses along with its interconnection with the generalized Drazin inverse.  Further, some of the properties have also been investigated by considering the idempotent condition and decomposition's. Finally, GD1 inverses and 1GD inverses allow us to introduce binary relations. It will be helpful to mention a few key points for future work. 

\begin{enumerate}
\item[$\bullet$] Perturbation bounds related to the GD1 and 1GD inverses is an interesting for possible research.
\item[$\bullet$] GD1 and 1GD inverses of the sum of operators can be studied.
\item[$\bullet$] Investigation of the reverse order law for the class GD1 and 1 GD inverses, for Hilbert space operators would be an interesting idea for further research.
\item[$\bullet$] It is interesting to study the GD1 and 1GD inverses over the algebraic structure of a ring.
\end{enumerate}
\medskip

\section*{Conflict of interest}

None.

\section*{Data availability}
 
None.

\begin{acknowledgements}

\end{acknowledgements}

%
%

\bibliographystyle{abbrv}
\bibliography{Reference}   

\end{document}